\documentclass[11pt,reqno]{amsart}

\usepackage[T1]{fontenc}
\usepackage{mathtools,amssymb,mathrsfs,bm}
\usepackage{graphicx}
\usepackage{enumitem}
\usepackage[a4paper,textwidth=6.5in,textheight=9in,centering]{geometry}
\usepackage{cite}
\usepackage{hyperref}
\theoremstyle{plain}
\newtheorem{theorem}{Theorem}[section]
\newtheorem{lemma}{Lemma}[section]
\newtheorem{proposition}{Proposition}[section]
\newtheorem{corollary}{Corollary}[section]
\newtheorem{question}{Question}
\theoremstyle{definition}

\newtheorem{example}{Example}[section]
\newtheorem{remark}{Remark}
\numberwithin{equation}{section}

\let\leq\leqslant
\let\geq\geqslant

\newcommand{\N}{\mathbb{N}}
\newcommand{\Zp}{\mathbb{N}_{0}}
\newcommand{\rightovernotleft}{%
  \mathrel{\vcenter{\offinterlineskip
    \halign{\hfil$##$\hfil\cr
      \rightharpoonup\cr
      \noalign{\kern-0.35ex}
      \ooalign{%
        $\leftharpoondown$\cr
        \hidewidth\raisebox{0.2ex}{\scalebox{0.4}{$~\mathbf{/}$}}\hidewidth\cr
      }\cr
    }%
  }}%
}
\newcommand{\rightoverquesmarkleft}{%
  \mathrel{\vcenter{\offinterlineskip
    \halign{\hfil$##$\hfil\cr
      \rightharpoonup\cr
      \noalign{\kern-0.35ex}
      \ooalign{%
        $\leftharpoondown$\cr
        \hidewidth\raisebox{-0.45ex}{\scalebox{0.55}{$~?$}}\hidewidth\cr
      }\cr
    }%
  }}%
}
\newcommand{\orb}{\operatorname{Orb}^{+}}

\title[Weak mixing with ${\mathscr M}_{\alpha}$-shadowing]
{Weak mixing for dynamical systems with
${\bm{\mathscr M}}_{\bm\alpha}$-shadowing}

\author[X. Wu]{Xinxing Wu}
\address{School of Mathematics and Statistics, Guizhou University of
Finance and Economics, Guiyang {\rm 550025}, China}
\email{wuxinxing5201314@163.com}

\author[J. Wei]{Jidan Wei}
\address{School of Mathematics and Statistics, Guizhou University of
Finance and Economics, Guiyang {\rm 550025}, China}
\email{jidan@mail.gufe.edu.cn}

\author[X. Zhang]{Xu Zhang}
\address{Department of Mathematics, Shandong University, Weihai,
Shandong {\rm 264209}, China}
\email{xu\_zhang\_sdu@mail.sdu.edu.cn}
\thanks{Corresponding author: Xu Zhang.}

\subjclass[2020]{37B65, 37B20}
\keywords{shadowing property, \(\mathscr{M}_{\alpha}\)-shadowing property,
weak mixing, \(M\)-system, \(E\)-system}
\date{}

\hypersetup{
  pdftitle={Weak mixing for dynamical systems with M-alpha-shadowing},
  pdfauthor={Xinxing Wu, Jidan Wei, Xu Zhang}
}

\begin{document}

\begin{abstract}
This paper studies weak mixing for two classes of dynamical systems with the
\(\mathscr{M}_{\alpha}\)-shadowing property using the generic factor
characterization of weak scattering. First, for every
\(\alpha\in[0,1)\), every \(M\)-system with the
\(\mathscr{M}_{\alpha}\)-shadowing property is weakly mixing, thereby answering
\cite[Question~1]{ODH} negatively.
Second, for every \(\alpha\in(0,1)\), the
\(\mathscr{M}_{\alpha}\)-shadowing property implies weak mixing for
\(E\)-systems and, in particular, for dynamical systems whose measure center is
the whole space.
Finally, an \(E\)-system with the
\(\mathscr{M}_{0}\)-shadowing property which is not weakly mixing is constructed, showing that
the parameter range \(\alpha\in(0,1)\) in the \(E\)-system result is optimal.
\end{abstract}

\maketitle

\section{Introduction}

The shadowing property provides a fundamental mechanism for
transferring information from approximate trajectories to genuine orbits and
thereby connects orbit tracing with stability, recurrence, and complexity in
dynamical systems. The classical shadowing lemma arose from the
work of Anosov and Bowen on hyperbolic systems~\cite{Anosov,Bowen}; in its
topological formulation, the shadowing property requires every sufficiently
accurate pseudo-orbit to be uniformly traced by a genuine orbit. Walters
\cite{Walters1978} established the topological stability of
expansive homeomorphisms with the shadowing property. A systematic treatment of
shadowing theory is given by Pilyugin~\cite{Pilyugin}. For dynamical systems
with the shadowing property, Moothathu~\cite{Moothathu} showed that the minimal
points are dense in the nonwandering set and that either a nonminimal recurrent
point or a sensitive minimal subsystem implies positive topological entropy.
Li and Oprocha~\cite{LiOprocha} established the equivalence between
weak mixing and uniformly positive entropy for non-wandering systems
with the shadowing property.

Two related approaches to orbit tracing are specification-type
orbit gluing and shadowing with average error. Specification-type
notions include Bowen's specification property \textup{(Spec)}
\cite{BowenSpecification}, the \(g\)-almost product property of
Pfister and Sullivan~\cite{PfisterSullivan}, and Thompson's
almost specification property~\textup{(ASpec)}~\cite{Thompson},
the latter formulated by means of mistake functions.
Average-error tracing includes the average shadowing property
\textup{(ASP),} due to Blank~\cite{Blank}, and the asymptotic average
shadowing property \textup{(AASP),} introduced by Gu~\cite{Gu}.

Partial shadowing replaces tracing at every time by tracing on a
prescribed large set of times. Its lower-density theory began with the
\(\underline{d}\)-shadowing property of Ahmadi Dastjerdi and
Hosseini~\cite{DH} based on ergodic pseudo-orbits.
Oprocha, Ahmadi Dastjerdi and Hosseini~\cite{ODH} studied partial
shadowing of complete pseudo-orbits, while Brian, Meddaugh and Raines
\cite{BMR} developed shadowing with respect to Furstenberg families. Wu,
Oprocha and Chen~\cite{WOC} subsequently introduced, for each
\(\alpha\in[0,1)\), the family \(\mathscr{M}_{\alpha}\) of subsets
\(A\subseteq\Zp\) satisfying
\(\underline{\mathrm{Dens}}(A)>\alpha\), and the corresponding
\(\mathscr{M}_{\alpha}\)-shadowing property. If
\(0\leq\alpha_1<\alpha_2<1\), then
\(\mathscr{M}_{\alpha_2}\subseteq\mathscr{M}_{\alpha_1}\); hence
\(\mathscr{M}_{\alpha_2}\)-shadowing implies
\(\mathscr{M}_{\alpha_1}\)-shadowing. In particular,
\(\mathscr{M}_{0}\)-shadowing is the weakest member of this scale and
coincides with the \(\underline{d}\)-shadowing property. The relevant relations among these properties are
summarized by
\begin{equation}\label{eq:intro-shadowing-chain}
\mathrm{Spec} \rightovernotleft \mathrm{ASpec}
\rightovernotleft \mathrm{AASP}
\rightoverquesmarkleft \mathrm{ASP}
\rightleftharpoons
\mathscr{M}_{\alpha}\ (\forall\alpha\in[0,1))
\rightovernotleft
\mathscr{M}_{0} \rightleftharpoons
\underline{d}.
\end{equation}
In \eqref{eq:intro-shadowing-chain}, \(\rightovernotleft\) denotes a strict
implication, whereas \(\rightoverquesmarkleft\) denotes an implication whose
converse remains open. The strictness of
\(\mathrm{Spec}\rightovernotleft\mathrm{ASpec}\) is recorded in
\cite[Remark~25]{KLOPanorama}. Wu, Oprocha and Chen established
\(\mathrm{ASpec}\Rightarrow\mathrm{AASP}\) without assuming surjectivity
\cite[Corollary~6.9]{WOC}. The converse fails by combining
\cite[Theorem~18]{KLO} with the surjective system constructed in
\cite[Section~6]{KOR}, which has periodic weak specification but does not have
almost specification \cite[Theorem~6.1]{KOR}. Wu, Oprocha and Chen also proved
\(\mathrm{AASP}\Rightarrow\mathrm{ASP}\) \cite[Theorem~4.2]{WOC}; the
converse is the second question in \cite[Question~10.3]{KKO}. Moreover, ASP is
equivalent to having the \(\mathscr{M}_{\alpha}\)-shadowing property
simultaneously for all \(\alpha\in[0,1)\) \cite[Theorem~5.5]{WOC}. Since
\(\mathscr{M}_{0}\)-shadowing coincides with the
\(\underline{d}\)-shadowing property,
\cite[Theorem~5 and Example~20]{ODH} show that the implication from this
simultaneous family to \(\mathscr{M}_{0}\)-shadowing is strict.

Recently, Can and Trilles established the equivalence of AASP and Kamae's vague
specification property \cite[Theorem~4.6]{CanTrilles}, constructed nontrivial
minimal and nontrivial proximal systems with AASP
\cite[Theorems~2 and~4.6]{CanTrilles}, and showed that AASP is inherited by factors
\cite[Corollary~7.11]{CanTrilles}. These results answer
\cite[Question~10.2]{KKO} and the AASP part of \cite[Question~10.6]{KKO}.
They further proved that, for surjective
systems, AASP is equivalent to ASP together with completeness in the dynamical
Besicovitch pseudometric \cite[Theorem~4.4]{CanTrilles}. Thus, in the
surjective setting, the open implication
\(\mathrm{ASP}\Rightarrow\mathrm{AASP}\) in
\eqref{eq:intro-shadowing-chain} is equivalent to completeness in the
dynamical Besicovitch pseudometric.

Several results relate average tracing to full-support invariant
measures and measure centers. Niu~\cite{Niu} obtained weak
mixing for systems with the ASP and dense minimal
points. Kulczycki, Kwietniak and Oprocha~\cite{KKO} showed that,
for a system admitting an invariant measure of full support,
ASpec implies density of minimal points, and ASP
implies weak mixing. They further established that if
\(A\) is a closed invariant set containing the measure center, then each of
ASpec, AASP and ASP of \((A, f|_{A})\) implies the same property for \((X,f)\)
\cite[Theorems~5.1, 5.2 and~5.5]{KKO}. Wu, Oprocha and Chen extended this
transfer principle to the \(\mathscr{M}_{\alpha}\)-shadowing property
\cite[Theorem~6.2]{WOC}. They also characterized ASpec by its restriction
to the measure center \cite[Theorem~6.7]{WOC}. Oprocha and Wu~\cite{OprochaWu}
showed that \((X,f)\) has the finite average shadowing property \textup{(FinASP)}
if and only if its measure-center subsystem has
FinASP; moreover, FinASP implies weak mixing of that
subsystem. None of these results establishes weak
mixing under the assumptions \(\mathrm{supp}(X,f)=X\) and one fixed
\(\mathscr{M}_{\alpha}\)-shadowing property with
\(\alpha\in(0,1)\).

For lower-density partial shadowing, Ahmadi Dastjerdi and Hosseini~\cite{DH}
established that every minimal system with the \(\underline{d}\)-shadowing
property is weakly mixing. For chain mixing systems with the
\(\mathscr{F}_{\underline{d}}\)-shadowing property, Oprocha,
Ahmadi Dastjerdi and Hosseini~\cite{ODH} obtained weak mixing when
periodic points are dense and also in the minimal case. They
further established total transitivity under density of minimal points
\cite[Theorem~7]{ODH} and posed the following question:

\begin{question}[{\cite[Question~1]{ODH}}]\label{ques:ODH}
Does there exist a non-weakly mixing system which is chain mixing, has a dense
set of minimal points, and has the
\(\mathscr{F}_{\underline{d}}\)-shadowing property?
\end{question}

Wu et al.~\cite[Theorem~7.2]{WOC} subsequently showed that if the minimal
points are dense and \((X,f)\) has the \(\underline{d}\)-shadowing property
(and hence the \(\mathscr{F}_{\underline{d}}\)-shadowing property), then
\((X,f^{n})\) is syndetically transitive for every \(n\in\mathbb{N}\).
However, this result does not settle Question~\ref{ques:ODH}.

Under the \(\mathscr{M}_{\alpha}\)-shadowing property, density of
minimal points implies transitivity for \(\alpha\in[0,1)\), whereas the
existence of an invariant measure with full support implies transitivity for
\(\alpha\in(0,1)\). The resulting systems are therefore \(M\)-systems and
\(E\)-systems, respectively. Accordingly, we investigate weak mixing for
\(M\)-systems and \(E\)-systems with the
\(\mathscr{M}_{\alpha}\)-shadowing property. The main results and the endpoint
obstruction are summarized as follows:
\begin{align}
 M\text{-system}+\mathscr{M}_{\alpha}\text{-shadowing}
 \ (\alpha\in [0, 1))
 &\ \Rightarrow\ \text{weak mixing},
 \label{eq:intro-M-system}\\
 E\text{-system}+\mathscr{M}_{\alpha}\text{-shadowing}
 \ (\alpha\in(0, 1))
 &\ \Rightarrow\ \text{weak mixing},
 \label{eq:intro-E-system}\\
 E\text{-system}+\mathscr{M}_{0}\text{-shadowing}
 &\ \nRightarrow\ \text{weak mixing}.
 \label{eq:intro-endpoint}
\end{align}
The special case \(\alpha=0\) of \eqref{eq:intro-M-system} gives a
negative answer to Question~\ref{ques:ODH}. Together,
\eqref{eq:intro-E-system} and \eqref{eq:intro-endpoint} show that the range
\(\alpha\in(0,1)\) in \eqref{eq:intro-E-system} is sharp.
More broadly, our approach reveals that the generic factor characterization of
weak scattering provides an effective mechanism for deriving weak mixing from
partial shadowing properties.

\section{Preliminaries}\label{sec:prelim}

This section recalls the basic definitions and auxiliary results
used throughout the paper.

\subsection{Topological dynamics}

Throughout, let \(\N=\{1, 2, 3, \ldots\}\) and \(\Zp=\{0, 1, 2, \ldots\}\).
A set \(A\subseteq \Zp\) is
\begin{itemize}
  \item \textit{thick} if it contains arbitrarily long runs of
  $\Zp$, i.e., for any \(n\in \N\), there exists an \(i\in \Zp\)
  such that \(\{i, i+1, \ldots, i+n\}\subseteq A\).
  \item \textit{syndetic} if it has bounded gaps, i.e.,
  there exists \(L\in\N\) such that \([i, i+L]\cap A
  \neq \varnothing\) for any \(i\in \Zp\).
\end{itemize}

For \(A\subseteq\Zp\), the \textit{upper density} and
\textit{lower density} of \(A\) are, respectively,
\[
\overline{\mathrm{Dens}}(A)=\limsup_{n\to\infty}
 \frac{\#(A\cap\{0,\ldots,n\})}{n+1}
 \ \text{ and }\
\underline{\mathrm{Dens}}(A)=\liminf_{n\to\infty}
 \frac{\#(A\cap\{0,\ldots,n\})}{n+1}.
\]
If the limit exists, the \textit{density} of \(A\) is defined by
\[
 \mathrm{Dens}(A)=\lim_{n\to\infty}
 \frac{\#(A\cap\{0,\ldots,n\})}{n+1}.
\]
Fix any \(\alpha\in[0, 1)\), and denote by $\mathscr{M}_{\alpha}$
(resp. $\mathscr{M}^{\alpha}$) the family consisting of sets
$A\subseteq \Zp$ with $\underline{\mathrm{Dens}}(A)>\alpha$ (resp.
$\overline{\mathrm{Dens}}(A)>\alpha$).

A \emph{dynamical system} is a pair \((X,f)\) consisting of a compact metric
space \((X,d)\) and a continuous surjection \(f\colon X\longrightarrow X\).
For \(x\in X\) and \(A,B\subseteq X\),
we define the \textit{return-time sets}
\(N_{f}(x,B)=\{n \in \N: f^{n}(x)\in B\}\)
and \(N_{f}(A,B)=\{n\in\N:f^{n}(A)\cap B\neq\varnothing\}.\)

The \textit{orbit} of a point \(x\in X\) under \(f\) is the set
\(\orb(x,f)=\{f^{n}(x):n\in \Zp\}.\) A point $x\in X$ is
a \textit{transitive point} of \((X,f)\) if \(\overline{\orb(x,f)}=X\).
Denote by \(\mathrm{Tran}(X,f)\) the set of all
transitive points of \((X,f)\).
A pair $(x, y)$ is \emph{proximal} if
$\liminf_{n\to \infty}d(f^{n}(x), f^{n}(y))=0$.

A dynamical system \((X,f)\) is \textit{transitive} if
\(N_{f}(U,V)\neq\varnothing\) for any non-empty open sets
\(U\), \(V\subseteq X\); \textit{weakly mixing} if
the product system $(X\times X, f\times f)$ is transitive;
\textit{mixing} if \(\Zp\setminus N_{f}(U,V)\)
is finite for any non-empty open sets \(U\), \(V\subseteq X\);
and \textit{minimal} if \(\mathrm{Tran}(X,f)=X\).
Equivalently, $(X, f)$ is minimal if and only if it
contains no proper subsystems.

A point \(x\in X\) is called a \textit{minimal point} if
  \((\overline{\orb(x,f)},f|_{\overline{\orb(x,f)}})\) is minimal.
It is well known that
\begin{itemize}
  \item a dynamical system $(X, f)$ is weakly mixing if and only if \(N_{f}(U, V)\)
  is thick for any nonempty open subsets \(U,V\subseteq X\).
  \item a point \(x\in X\) is a minimal point if and only if \(N_{f}(x,U)\) is syndetic
  for any neighborhood \(U\) of \(x\).
\end{itemize}

A dynamical system $(X, f)$ is
\begin{itemize}
  \item an \emph{\(M\)-system} if it is transitive and the set of all minimal points is dense in $X$;
  \item \emph{equicontinuous} if, for any \(\varepsilon>0\), there exists \(\delta>0\) such that
for all $x, y\in X$ with \(d(x,y)<\delta\) and all $n\in \Zp$, we have
\(d(f^{n}(x),f^{n}(y))<\varepsilon\).
\end{itemize}

Two dynamical systems are \emph{weakly disjoint} if their product
system is transitive. A transitive system is \emph{weakly scattering} if it
is weakly disjoint from every minimal equicontinuous system.

We recall the notion of generic factors which was introduced by
Huang and Ye~\cite{HY}. Let \((X,f)\) and \((Y,g)\) be
two transitive systems. If there exists a continuous map
\(\pi\colon \mathrm{Tran}(X,f)\longrightarrow \mathrm{Tran}(Y,g)\) with
\(\pi(f(x))=g(\pi(x))\) for all \(x\in \mathrm{Tran}(X,f)\), then we say
\(\pi\) is a \emph{generic homomorphism} from \((X,f)\) to \((Y,g)\), \((Y,g)\) is
a \emph{generic factor} of \((X,f)\), and \((X,f)\) is a \emph{generic extension} of
\((Y,g)\).  It is not hard to see that if \((X,f)\) is minimal and
\(\pi\colon (X,f)\longrightarrow (Y,g)\) is a generic homomorphism, then
\(\pi\) is a factor map.

The following results of Huang and Ye describe weak scattering in terms of
equicontinuous generic factors and, for \(E\)-systems, weak mixing.

\begin{lemma}[{\cite[Theorem~3.8]{HY}}]\label{lem:HY-factor}
Let \((X,f)\) be a transitive system.  Then \((X,f)\) is weakly scattering if
and only if it has no non-trivial equicontinuous generic factor.
\end{lemma}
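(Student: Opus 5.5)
We prove both directions of Lemma~\ref{lem:HY-factor}, writing $\mathrm{Tran}$ for the transitive points.

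\emph{Easy direction: a non-trivial equicontinuous generic factor prevents weak scattering.} Let $\pi\colon\mathrm{Tran}(X,f)\to\mathrm{Tran}(Y,g)$ be a generic homomorphism with $(Y,g)$ equicontinuous and non-trivial. A transitive equicontinuous system is minimal, so $\mathrm{Tran}(Y,g)=Y$ and $(Y,g)$ is a minimal equicontinuous system. We may pass to a compatible metric $\rho$ on $Y$ for which $g$ is an isometry; surjectivity of $g$ on a compact space and equicontinuity make this possible.

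Fix $x_0\in\mathrm{Tran}(X,f)$. Consider the closed set
\[
W=\overline{\{(f^n x_0,\, g^n\pi(x_0)) : n\ge 0\}}\subseteq X\times Y .
\]
Since $Y$ is non-trivial, choose $y'\ne \pi(x_0)$ and $\varepsilon$ with $\rho(y',\pi(x_0))>3\varepsilon$. By continuity of $\pi$ at $x_0$ there is an open $U\ni x_0$ with $\pi(U\cap\mathrm{Tran})\subseteq B(\pi(x_0),\varepsilon)$. Now take the open sets $U\times B(\pi(x_0),\varepsilon)$ and $U\times B(y',\varepsilon)$ in $X\times Y$.

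Suppose $(f\times g)^n(x,y)\in U\times B(y',\varepsilon)$ with $(x,y)\in U\times B(\pi(x_0),\varepsilon)$. Because $U$ is open and $\mathrm{Tran}$ is dense, we may take $x\in\mathrm{Tran}$, keeping $f^nx\in U$. Then $\pi(f^nx)=g^n\pi(x)$ lies within $\varepsilon$ of $\pi(x_0)$, using $f^n x\in U\cap \mathrm{Tran}$. By the isometry,
\[
\rho(g^ny,\pi(x_0))\le \rho(g^ny,g^n\pi(x))+\varepsilon\le 2\varepsilon+\varepsilon<3\varepsilon .
\]
This contradicts $g^ny\in B(y',\varepsilon)$. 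Hence $X\times Y$ is not transitive, and $(X,f)$ is not weakly scattering.

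\emph{Hard direction: failure of weak scattering produces a non-trivial equicontinuous generic factor.} Suppose $(X,f)\times(Y,g)$ is not transitive for some minimal equicontinuous $(Y,g)$, which we may take to be isometric. Fix $x_0\in\mathrm{Tran}(X,f)$ and let $W=\overline{\orb((x_0,y_0),f\times g)}$.

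For each $x\in X$, set $W_x=\{y:(x,y)\in W\}$. Using the isometric structure (for example, $Y$ carries a compact monothetic or homogeneous group-like structure when $Y$ is minimal isometric), $W$ is invariant under the "rotations" commuting with $g$. One then shows that for transitive $x$ the fibre $W_x$ is a coset of a fixed closed subgroup-like set $H$. This uses non-transitivity to get $W\ne X\times Y$, and transitivity of $x$ to see that $W_x$ is determined by approximating $x$ along the orbit of $x_0$.

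Define $\pi(x)=W_x\in Y/H$. Then $Y/H$ is a non-trivial equicontinuous minimal factor of $Y$, non-trivial because $W\ne X\times Y$. Equivariance $\pi\circ f=\bar g\circ\pi$ is immediate from invariance of $W$.

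\emph{Main obstacle.} The hard step is the continuity of $\pi$ on $\mathrm{Tran}(X,f)$, since upper semicontinuity of fibres alone is not enough. The plan is as follows. Fibres over transitive points all have the same "size", because they are translates of $H$ by the isometric structure. An upper semicontinuous map into cosets of equal size is automatically continuous in the Hausdorff metric, and this gives continuity of $\pi$ on $\mathrm{Tran}(X,f)$.
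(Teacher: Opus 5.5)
The paper gives no proof of this lemma (it is quoted from Huang and Ye), so your argument has to stand on its own. Your first direction is essentially correct, except for one constant. With $\rho(y',\pi(x_0))>3\varepsilon$, the condition $g^ny\in B(y',\varepsilon)$ only yields $\rho(g^ny,\pi(x_0))>2\varepsilon$, and that does not contradict your bound $<3\varepsilon$. Choose $\rho(y',\pi(x_0))>4\varepsilon$ instead. (The set $W$ you define in that direction is never used.)

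\textbf{The converse direction has a genuine gap.} Everything rests on the claim that the fibres $W_x$ over transitive points are cosets of one fixed closed subgroup. You assert this without proof, and the justification you give is false. A minimal equicontinuous system is a rotation $y\mapsto y+a$ on a compact abelian (monothetic) group $G$, so every rotation commutes with $g$. If $W$ were invariant under all of them, then $W=X\times Y$, so the product would be transitive, contradicting your hypothesis.

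The correct subgroup is $H=\{h\in G:(x_0,y_0+h)\in W\}$. Write $R_h\colon y\mapsto y+h$; since $\mathrm{id}\times R_h$ commutes with $f\times g$ and $W$ is closed and positively invariant, $h\in H$ implies $W+(0,h)\subseteq W$. So $H$ is a closed subsemigroup of a compact group, hence a subgroup, and $W+(0,H)=W$.

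Now take $x\in\mathrm{Tran}(X,f)$ and $y_1,y_2\in W_x$. Pick $n_k$ with $f^{n_k}x\to x_0$ and, after passing to a subsequence, $n_ka\to c$. Then $(x_0,y_i+c)\in W$ for $i=1,2$, so $y_1-y_2\in H$. Hence $W_x$, which is nonempty because $p_X(W)=X$, is exactly one coset of $H$.

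Non-triviality also needs this structure. If $H=G$, then $W=p_X(W)\times G=X\times G$, so $(x_0,y_0)$ has a dense orbit. Since $f\times g$ is surjective, the product is then transitive, a contradiction.

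Once the coset structure is established, equivariance and your continuity argument (upper semicontinuity of $x\mapsto W_x$ plus translation invariance of the metric) go through. So the real obstacle is not continuity but the coset structure, and that is the step missing from your proposal.
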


\begin{lemma}[{\cite[Proposition~4.14~(2)]{HY}}]\label{lem:HY-E}
Let \((X,f)\) be an \(E\)-system. Then \((X,f)\)
is weakly scattering if and only if it is weakly mixing.
\end{lemma}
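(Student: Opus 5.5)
The implication ``weakly mixing \(\Rightarrow\) weakly scattering'' is classical and holds for any transitive system. If \((X,f)\) is weakly mixing and \((Y,g)\) is minimal, then \((X\times Y,f\times g)\) is transitive: for open \(W\subseteq Y\) the set \(N_g(W,W')\) is syndetic, and \(N_f(U,V)\) is thick, so the two sets meet. This gives transitivity of the product directly from the characterization of weak mixing by thick return-time sets. So the whole content lies in the converse for \(E\)-systems. I will use the criterion (Furstenberg/Petersen) that \((X,f)\) is weakly mixing provided \(N_f(U,U)\cap N_f(U,V)\neq\varnothing\) for all nonempty open \(U,V\).

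Fix a full-support invariant measure \(\mu\). For nonempty open \(U\), the set \(R(U)=\{n:\mu(U\cap f^{-n}U)>0\}\) is contained in \(N_f(U,U)\). Split \(1_U=P1_U+Q1_U\) in \(L^2(\mu)\), where \(P\) is the projection onto the Kronecker (discrete-spectrum) part of the Koopman operator and \(Q\) is the projection onto its continuous-spectrum part. Then
\[
\mu(U\cap f^{-n}U)=\langle P1_U,P1_U\circ f^n\rangle+\langle Q1_U,Q1_U\circ f^n\rangle .
\]
The first term is an almost periodic sequence, and its value at \(n=0\) is at least \(\langle 1_U,1\rangle^2=\mu(U)^2>0\) because the constants lie in the Kronecker part. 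Hence it exceeds \(\mu(U)^2/2\) on a Bohr neighbourhood \(B_U\) of \(0\), coming from a rotation on a compact monothetic group, which is a minimal equicontinuous system. The second term tends to \(0\) along a set of density one. Consequently, \(R(U)\supseteq B_U\setminus Z_U\) with \(\mathrm{Dens}(Z_U)=0\).

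Next I reduce \(N_f(U,V)\) to a translated return set. If \(n_0\in N_f(U,V)\) and \(U'=U\cap f^{-n_0}V\neq\varnothing\), then \(n_0+N_f(U',U')\subseteq N_f(U,V)\). Applying the previous step to \(U'\) gives
\[
N_f(U,V)\supseteq n_0+(B_{U'}\setminus Z_{U'}).
\]
It therefore suffices to choose \(n_0\) so that \((n_0+B_{U'})\cap B_U\) has positive upper density; removing the density-zero sets \(Z_U\) and \(n_0+Z_{U'}\) then leaves an element of \(N_f(U,U)\cap N_f(U,V)\).

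Here weak scattering enters. Let \((K,R_\theta)\) be the minimal equicontinuous rotation carrying both Bohr sets, and let \(W\ni 0\) be a small open set with \(W-W\) inside the defining neighbourhoods. Weak disjointness of \(X\) from \((K,R_\theta)\) gives some \(n_0\in N_f(U,V)\) with \(n_0\theta\in W\). Then \(n_0+B_{U'}\) and \(B_U\) both contain a Bohr set around the same point, and a nonempty Bohr set has positive density. The main obstacle is making this step precise: \(U'\), and hence \(B_{U'}\), depends on \(n_0\). I would resolve this by first fixing the Bohr parameters uniformly. The correlation lower bound \(\mu(U')^2/2\) and the relevant eigenfunctions can be controlled via the eigenfunctions of \(\mu\) restricted to \(U\cap f^{-n}V\), using a finite family of eigenfunctions approximating \(P1_U\) and \(P1_V\). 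Only then would I invoke weak scattering against the finite-dimensional torus rotation that these eigenfunctions generate.
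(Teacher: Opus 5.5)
The paper gives no proof of this lemma; it imports it from Huang--Ye \cite[Proposition~4.14~(2)]{HY} and uses it as a black box. Your argument is therefore an independent, self-contained route, and its ingredients are sound:
the easy direction (a thick set meets a syndetic set);
the criterion that $N_f(U,U)\cap N_f(U,V)\neq\varnothing$ for all nonempty open $U,V$ forces weak mixing;
the inclusion $R(U)\supseteq B_U\setminus Z_U$, obtained from the Kronecker/continuous splitting of $1_U$ in $L^2(\mu)$;
and the translation $n_0+N_f(U',U')\subseteq N_f(U,V)$.
Full support of $\mu$ is exactly what guarantees $\mu(U')>0$ for the open set $U'$ produced later. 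One small point: $f$ need not be invertible, so the Koopman operator is only an isometry. To justify that $\langle Q1_U\circ f^{n},Q1_U\rangle\to 0$ along a density-one set, use the Wold or Jacobs--de~Leeuw--Glicksberg splitting, or pass to the natural extension. Compared with the citation, your route is more informative. It shows that only weak disjointness from rotations generated by finitely many $L^2(\mu)$-eigenvalues is needed. It also produces a set of positive lower density inside $N_f(U,U)\cap N_f(U,V)$, not just one element.

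The ``main obstacle'' you flag, however, is not real, and the repair you sketch is both unnecessary and unsupported. In general there is no uniform lower bound on $\mu(U\cap f^{-n}V)$ over $n\in N_f(U,V)$. Moreover, $P1_{U\cap f^{-n}V}=P(1_U\cdot 1_V\circ f^{n})$ is not controlled by finitely many eigenfunctions approximating $P1_U$ and $P1_V$, because $P$ does not respect products.

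The fix is that the rotation used in the weak-scattering step only has to carry $B_U$, which does not depend on $n_0$.
Write $B_U\supseteq\{n:n\theta\in W\}$ for a rotation by $\theta$ on a compact monothetic group $K$ and a neighbourhood $W$ of $0$.
Choose neighbourhoods $W_0,W_1$ of $0$ with $W_0+W_0\subseteq W$ and $W_1-W_1\subseteq W_0$.
Transitivity of $X\times K$ applied to $U\times W_1$ and $V\times W_1$ gives $n_0\in N_f(U,V)$ with $n_0\theta\in W_0$.
Only now form $U'=U\cap f^{-n_0}V$ and $B_{U'}\supseteq\{m:m\theta'\in W'\}$, where $\theta'$ is an unrelated rotation.
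The set $S=\{m\in\mathbb{N}:m\theta\in W_0,\ m\theta'\in W'\}$ is a Bohr neighbourhood of $0$ for the rotation by $(\theta,\theta')$ on its orbit closure. By unique ergodicity of that rotation it has positive lower density.
Also $n_0+S\subseteq B_U\cap(n_0+B_{U'})$, because $(n_0+m)\theta\in W_0+W_0\subseteq W$.
Removing the density-zero sets $Z_U$ and $n_0+Z_{U'}$ from $n_0+S$ still leaves some $n$, and any such $n$ lies in $N_f(U,U)\cap N_f(U,V)$.

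This is exactly your own remark that two Bohr sets around the same point intersect in a set of positive density. It simply does not require a common rotation fixed in advance.
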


\subsection{Probability measures}
Following Walters~\cite{Walters}, we recall the basic notions and
results concerning invariant measures used below.
The \(\sigma\)-algebra of Borel
subsets of \(X\) is denoted by \(\mathscr{B}(X)\). We denote by
\(\mathcal{M}(X)\) the collection of all probability measures
defined on \((X,\mathscr{B}(X))\); its members are called
\emph{Borel probability measures} on \(X\). Each \(x\in X\) determines a
\textit{Dirac point measure} \(\delta_{x}\in\mathcal{M}(X)\)
defined by
\[
 \delta_{x}(A)=
 \begin{cases}
  1,&x\in A,\\
  0,&x\notin A,
 \end{cases}
 \quad (A\in\mathscr{B}(X)).
\]

The weak-$*$ topology on \(\mathcal{M}(X)\) is the smallest topology making
each of the maps
\[
 \mu\longmapsto\int_{X}\varphi\,d\mu
 \quad(\varphi\in C(X))
\]
continuous. Thus \(\mu_{m}\overset{w^{*}}{\longrightarrow}\mu\) in
\(\mathcal{M}(X)\) if and only if
\[
 \lim_{m\to\infty}\int_{X}\varphi\,d\mu_{m}
 =\int_{X}\varphi\,d\mu
 \quad\text{for every }\varphi\in C(X).
\]
It is well known that \(\mathcal{M}(X)\) is a compact
metrizable space in the weak-$*$ topology.

A measure \(\mu\in\mathcal{M}(X)\) is \emph{\(f\)-invariant} if
\(\mu(f^{-1}(A))=\mu(A)\) for every \(A\in\mathscr{B}(X)\). Denote by
\(\mathcal{M}(X,f)\) the set of all \(f\)-invariant Borel probability
measures on \(X\); that is,
\[
 \mathcal{M}(X,f)
 =\{\mu\in\mathcal{M}(X):\mu(f^{-1}(A))=\mu(A)
       \text{ for every }A\in\mathscr{B}(X)\}.
\]
Equivalently, by \cite[Theorem~6.8]{Walters},
\(\mu\in\mathcal{M}(X,f)\) if and only if
\[
 \int_{X}\varphi\circ f\,d\mu=\int_{X}\varphi\,d\mu
 \quad\text{for every }\varphi\in C(X).
\]

A measure \(\mu\in\mathcal{M}(X,f)\) is \emph{ergodic} if
\(\mu(A)\in\{0,1\}\) whenever \(A\in\mathscr{B}(X)\) and
\(f^{-1}(A)=A\). Equivalently, the same conclusion holds whenever
\(\mu(f^{-1}(A)\mathbin{\triangle}A)=0\).

The \emph{support} of \(\mu\in\mathcal{M}(X)\) is the closed set
\[
 \operatorname{supp}(\mu)
 =\{x\in X:\mu(U)>0\text{ for every open neighborhood }U\text{ of }x\}.
\]
The \emph{measure center} of \((X,f)\) is
\[
 \mathrm{supp}(X,f)
 =\overline{\bigcup_{\mu\in\mathcal{M}(X,f)}
                    \operatorname{supp}(\mu)}.
\]

A transitive system \((X,f)\) is called an \emph{\(E\)-system}
if \(\mathcal{M}(X,f)\) contains a measure with full support.
Since \(X\) has a countable base, \(\mathrm{supp}(X,f)=X\) if and only if
\(\mathcal{M}(X,f)\) contains a measure with full support.

A point \(x\in X\) is \emph{generic for} \(\mu\in\mathcal{M}(X,f)\) if
\[
 \frac{1}{m}\sum_{n=0}^{m-1}\delta_{f^{n}(x)}
 \overset{w^{*}}{\longrightarrow}\mu
 \quad\text{as }m\longrightarrow\infty.
\]
The following lemma combines Lemma~6.13, Theorem~6.14, and the remarks
following Theorems~6.4 and~6.10 of \cite{Walters}.

\begin{lemma}[{\cite{Walters}}]\label{lem:Walters-measure-facts}
Let $(X, f)$ be a dynamical system.
Then the following statements hold:
\begin{enumerate}[label=\textup{(\arabic*)},ref=\textup{(\arabic*)}]
\item\label{item:Walters-generic-point}
If \(\mu\in\mathcal{M}(X,f)\) is ergodic, then \(\mu\)-almost every
point is generic for \(\mu\).
\item\label{item:Walters-ergodic-decomposition}
If \(U\subseteq X\) is open and \(\mu\in\mathcal{M}(X,f)\) satisfies
\(\mu(U)>0\), then there exists an ergodic measure
\(\nu\in\mathcal{M}(X,f)\) such that \(\nu(U)>0\).
\item\label{item:Walters-continuity-set}
If \(\mu_{m},\mu\in\mathcal{M}(X)\),
\(\mu_{m}\overset{w^{*}}{\longrightarrow}\mu\), and
\(A\in\mathscr{B}(X)\) satisfies \(\mu(\partial A)=0\), then
\[
 \lim_{m\to\infty}\mu_{m}(A)=\mu(A).
\]
\end{enumerate}
\end{lemma}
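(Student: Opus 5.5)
The statement just before this point is Lemma~\ref{lem:Walters-measure-facts}, which collects standard facts from Walters; the proof is a matter of citing the relevant results.

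For \ref{item:Walters-generic-point}, I would apply Birkhoff's ergodic theorem to a countable dense subset \(\{\varphi_k\}\) of \(C(X)\), using that \(C(X)\) is separable because \(X\) is compact metric.
\begin{itemize}
\item For each \(k\), ergodicity gives a full-measure set \(X_k\) on which
\[
\frac1m\sum_{n=0}^{m-1}\varphi_k(f^n x)\longrightarrow\int\varphi_k\,d\mu .
\]
\item The intersection \(\bigcap_k X_k\) still has full measure.
\item Convergence extends from the \(\varphi_k\) to every \(\varphi\in C(X)\) by uniform approximation, since the averaging functionals have norm at most \(1\).
\end{itemize}
This is exactly Walters, Theorem~6.14 together with Lemma~6.13.

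For \ref{item:Walters-ergodic-decomposition}, I would use the ergodic decomposition. By Choquet's theorem, \(\mu=\int\nu\,d\tau(\nu)\) with \(\tau\) supported on the ergodic measures; equivalently, \(\mu\) lies in the closed convex hull of the extreme points of the compact convex set \(\mathcal{M}(X,f)\), by Krein--Milman. A cleaner route avoids Choquet theory. Suppose \(\nu(U)=0\) for every ergodic \(\nu\). Pick a continuous \(\varphi\) with \(0\le\varphi\le\mathbf 1_U\) and \(\int\varphi\,d\mu>0\); this is possible by inner regularity, choosing a compact \(K\subseteq U\) with \(\mu(K)>0\) and applying Urysohn. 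Then \(\int\varphi\,d\nu=0\) for all extreme points \(\nu\). Since \(\nu\mapsto\int\varphi\,d\nu\) is affine and continuous, Krein--Milman (Walters, Theorem~6.10, the extreme points being the ergodic measures) forces \(\int\varphi\,d\mu=0\), a contradiction.

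For \ref{item:Walters-continuity-set}, this is the portmanteau theorem (remark after Walters, Theorem~6.4). Its key steps are:
\begin{itemize}
\item For closed \(F\), approximate \(\mathbf 1_F\) from above by continuous functions to get \(\limsup\mu_m(F)\le\mu(F)\).
\item For open \(G\), the analogous argument gives \(\liminf\mu_m(G)\ge\mu(G)\).
\item Apply these to \(\overline A\) and \(\operatorname{int}A\); their measures coincide with \(\mu(A)\) because \(\mu(\partial A)=0\).
\end{itemize}

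The only mildly delicate step is \ref{item:Walters-ergodic-decomposition}, and the Krein--Milman argument above handles it.
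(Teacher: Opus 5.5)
Your proposal is correct and matches the paper, which gives no argument of its own but cites the same results of Walters: Lemma~6.13 and Theorem~6.14 for (1), the remark after Theorem~6.10 for (2), and the remark after Theorem~6.4 for (3). Your Krein--Milman argument for (2) is a valid self-contained substitute for the ergodic decomposition the cited remark supplies: it uses a Urysohn function $0\le\varphi\le\mathbf{1}_U$ with $\int\varphi\,d\mu>0$, the identification of ergodic measures as the extreme points of $\mathcal{M}(X,f)$, and continuity and affinity of $\nu\mapsto\int\varphi\,d\nu$.
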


\begin{lemma}[{\cite[Proposition~8.2.8~(ii)]{Bogachev}}]
\label{lem:zero-boundary-neighborhood}
Let $x\in X$ and $U$ be an open neighborhood of $x$. Then,
for every \(\mu\in\mathcal{M}(X)\), there exists an open set \(V\) such that
\(x\in V\subseteq\overline{V}\subseteq U\) and \(\mu(\partial V)=0\).
\end{lemma}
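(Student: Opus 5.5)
This is Lemma~\ref{lem:zero-boundary-neighborhood}: for \(x\in X\), an open neighbourhood \(U\) of \(x\), and \(\mu\in\mathcal{M}(X)\), there is an open \(V\) with \(x\in V\subseteq\overline{V}\subseteq U\) and \(\mu(\partial V)=0\). The plan is to take \(V\) to be an open metric ball \(B(x,r)=\{y\in X: d(x,y)<r\}\) with radius \(r\) chosen outside a countable exceptional set. First, since \(U\) is open and contains \(x\), fix \(r_0>0\) with \(B(x,2r_0)\subseteq U\). For every \(0<r<r_0\) the closure of \(B(x,r)\) lies in the closed ball \(\{y: d(x,y)\le r\}\), which is contained in \(B(x,2r_0)\subseteq U\). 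So for any such \(r\), the set \(V=B(x,r)\) satisfies \(x\in V\subseteq\overline{V}\subseteq U\).

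Next I control the boundary. Since \(V\) is open, \(\partial V=\overline{V}\setminus V\). Every point of \(\overline{V}\) satisfies \(d(x,y)\le r\), and every point outside \(V\) satisfies \(d(x,y)\ge r\). Hence
\[
 \partial B(x,r)\subseteq S_r:=\{y\in X: d(x,y)=r\}.
\]
It therefore suffices to find \(r\in(0,r_0)\) with \(\mu(S_r)=0\).

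The key step is a counting argument. The spheres \(S_r\), \(r\in(0,r_0)\), are pairwise disjoint Borel sets, and \(\mu\) is a probability measure. For each \(k\in\N\), at most \(k\) of these spheres can have \(\mu(S_r)>1/k\); otherwise their union would have measure exceeding \(1\). Consequently, the set of \(r\) with \(\mu(S_r)>0\) is a countable union of finite sets, hence countable. Since \((0,r_0)\) is uncountable, some \(r\) in it has \(\mu(S_r)=0\). For that \(r\), \(V=B(x,r)\) satisfies \(\mu(\partial V)\le\mu(S_r)=0\), which completes the proof.

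There is no real obstacle in this argument. The only point needing care is the inclusion \(\partial B(x,r)\subseteq S_r\): the closure of an open ball can be strictly smaller than the closed ball, so one should only claim an inclusion, not equality. Working with a radius below \(r_0\) where \(B(x,2r_0)\subseteq U\) also guarantees \(\overline{V}\subseteq U\) without needing any further assumption on the metric.
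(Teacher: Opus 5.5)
Your proof is correct, and it is essentially the standard argument behind Bogachev's Proposition~8.2.8(ii), which the paper cites without reproducing a proof. The key steps — taking $V=B(x,r)$, noting $\partial B(x,r)\subseteq\{y: d(x,y)=r\}$, and choosing $r$ outside the countable set of radii whose pairwise disjoint spheres carry positive $\mu$-mass — are exactly how that result is proved.
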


\subsection{Shadowing properties}

Following \cite{ODH,WOC}, we recall the notions and results
concerning pseudo-orbits and shadowing properties used below.

Let \(\delta>0\). Following \cite[Definition~2.1]{WOC}, a
sequence \(\{x_{i}\}_{i=0}^{\infty}\subseteq X\) is called
\begin{enumerate}[label=\textup{(\arabic*)}]
  \item a \emph{\(\delta\)-pseudo-orbit} if
\[
d(f(x_{i}),x_{i+1})<\delta \quad (\forall i\in \Zp).
\]
  \item a \emph{\(\delta\)-ergodic pseudo-orbit} if
\[
\lim_{n\to\infty}
\frac{\#\{0\leq i<n:d(f(x_{i}),x_{i+1})\geq\delta\}}{n}=0.
\]
  \item a \emph{\(\delta\)-average pseudo-orbit} if there exists
  \(N\in\N\) such that
\[
 \frac{1}{n}\sum_{i=0}^{n-1}
 d(f(x_{i+k}),x_{i+k+1})<\delta
 \quad(\forall n\geq N,\ \forall k\in\Zp).
\]
\end{enumerate}

A finite \(\delta\)-pseudo-orbit \(\{x_{i}\}_{i=0}^{n}\) is called a
\emph{\(\delta\)-chain} from \(x_{0}\) to \(x_{n}\) of length \(n\).

A dynamical system \((X,f)\) is
\begin{itemize}
  \item \emph{chain transitive} if, for any $\delta>0$ and any
  $x, y\in X$, there exists a $\delta$-chain from $x$ to $y$;
  \item \emph{chain mixing} if, for any \(\delta>0\) and any
\(x,y\in X\), there exists \(N\in\mathbb{N}\) such that for any
\(n>N\), there exists a \(\delta\)-chain of length \(n\) from
\(x\) to \(y\).
\end{itemize}

Richeson and Wiseman~\cite{RW} proved that $(X, f)$ is chain mixing if
and only if $(X, f^{n})$ is chain transitive for every $n\in \N$.
The following lemma gives a uniform version of chain mixing in which the
lower bound on the chain length is independent of the endpoints.

\begin{lemma}[{\cite[Lemma~1]{ODH}}]\label{lem:uniform-chain}
Assume that \((X,f)\) is chain mixing. Then for any \(\varepsilon>0\), there
exists \(N=N(\varepsilon)\in \N\) such that for any \(x,y\in X\) and any
\(n\geq N\), there exist points \(\xi_{1},\ldots,\xi_{n}\) such that the sequence
\[
 (x,\xi_{1},\ldots,\xi_{n},y)
\]
is an \(\varepsilon\)-chain.
\end{lemma}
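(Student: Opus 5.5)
The plan is a compactness argument: apply chain mixing at finitely many reference pairs, then move the endpoints of those chains to arbitrary \(x,y\) using uniform continuity of \(f\).

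\textbf{Step 1 (choice of \(\eta\) and a net).} Fix \(\varepsilon>0\). Since \(X\) is compact, \(f\) is uniformly continuous. Choose \(0<\eta<\varepsilon/2\) such that \(d(a,b)<\eta\) implies \(d(f(a),f(b))<\varepsilon/2\). Then pick a finite \(\eta\)-net \(\{z_{1},\ldots,z_{k}\}\subseteq X\).

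\textbf{Step 2 (finitely many thresholds).} For each ordered pair \((z_{i},z_{j})\), chain mixing applied with \(\delta=\varepsilon/2\) gives \(N_{ij}\in\N\) such that, for every \(m>N_{ij}\), there is an \(\varepsilon/2\)-chain of length \(m\) from \(z_{i}\) to \(z_{j}\). Set
\[
N=\max_{i,j}N_{ij}+1 .
\]
Only finitely many pairs occur, so \(N\) is finite and does not depend on the eventual endpoints \(x,y\).

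\textbf{Step 3 (transfer to arbitrary endpoints).} Let \(x,y\in X\) and \(n\geq N\). Choose \(i,j\) with \(d(x,z_{i})<\eta\) and \(d(y,z_{j})<\eta\). Since \(n+1>N_{ij}\), there is an \(\varepsilon/2\)-chain \((z_{i},\xi_{1},\ldots,\xi_{n},z_{j})\) of length \(n+1\). Replace its endpoints by \(x\) and \(y\).
\begin{itemize}
\item First step: \(d(f(x),\xi_{1})\leq d(f(x),f(z_{i}))+d(f(z_{i}),\xi_{1})<\varepsilon/2+\varepsilon/2=\varepsilon\).
\item Interior steps: these are unchanged and have error \(<\varepsilon/2\).
\item Last step: \(d(f(\xi_{n}),y)\leq d(f(\xi_{n}),z_{j})+d(z_{j},y)<\varepsilon/2+\eta<\varepsilon\).
\end{itemize}
Hence \((x,\xi_{1},\ldots,\xi_{n},y)\) is an \(\varepsilon\)-chain, as required.

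\textbf{Main obstacle.} There is no deep difficulty. The points needing care are the bookkeeping of chain length and the choice of \(\eta\). The chain \((x,\xi_{1},\ldots,\xi_{n},y)\) has \(n\) interior points, so its length is \(n+1\), not \(n\); this is why \(N\) includes the \(+1\). Also, \(\eta\) must be small enough to absorb both the perturbation at the start, which needs continuity of \(f\) at \(z_{i}\), and the perturbation at the end, which is a plain triangle inequality.
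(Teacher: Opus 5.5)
Your argument is correct. The paper gives no proof of its own and simply cites \cite[Lemma~1]{ODH}; your compactness argument (a finite net, chain mixing at finitely many pairs of net points, and uniform continuity of \(f\) to move the endpoints) is the standard proof of this fact.

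One minor point: the chain \((z_{i},\xi_{1},\ldots,\xi_{n},z_{j})\) has length \(n+1\), so you only need \(n+1>N_{ij}\). Hence \(N=\max_{i,j}N_{ij}\) already suffices. The extra \(+1\) does no harm, but the reason you give for it is backwards: the longer length makes the requirement weaker, not stronger.
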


A system \((X,f)\) has
\begin{itemize}
  \item the \emph{average shadowing property} \textup{(ASP)} if, for any
  \(\varepsilon>0\), there exists \(\delta>0\) such that every
  \(\delta\)-average pseudo-orbit \(\{x_{i}\}_{i=0}^{\infty}\) is
  \(\varepsilon\)-shadowed on average by some point \(z\in X\), i.e.,
\[
 \limsup_{n\to\infty}\frac{1}{n}\sum_{i=0}^{n-1}
 d(f^{i}(z),x_{i})<\varepsilon.
\]
  \item the \emph{\(\mathscr{F}_{\underline{d}}\)-shadowing property} if,
  for any \(\varepsilon>0\), there exists \(\delta>0\) such that every
\(\delta\)-pseudo-orbit \(\{x_{i}\}_{i=0}^{\infty}\) is
$\mathscr{F}_{\underline{d}}$-$\varepsilon$-shadowed by some point \(z\in X\), i.e.,
\[
 \underline{\mathrm{Dens}}(\{j\in\Zp:d(f^{j}(z),x_{j})<\varepsilon\})>0.
\]
  \item the \emph{\(\underline{d}\)-shadowing property} if, for any
  \(\varepsilon>0\), there exists \(\delta>0\) such that every
  \(\delta\)-ergodic pseudo-orbit \(\{x_{i}\}_{i=0}^{\infty}\) is
  $\underline{d}$-$\varepsilon$-shadowed by some point \(z\in X\),
  i.e.,
\[
 \underline{\mathrm{Dens}}(\{j\in\Zp:d(f^{j}(z),x_{j})<\varepsilon\})>0.
\]
  \item the \emph{\(\mathscr{M}_{\alpha}\)-shadowing property} if, for any
\(\varepsilon>0\), there exists \(\delta>0\) such that every \(\delta\)-ergodic
pseudo-orbit \(\{x_{i}\}_{i=0}^{\infty}\) is
\(\mathscr{M}_{\alpha}\)-\(\varepsilon\)-shadowed by some point
\(z\in X\), i.e.,
\[
\{j\in\Zp:d(f^{j}(z),x_{j})<\varepsilon\}\in \mathscr{M}_{\alpha}.
\]
\end{itemize}

By \cite[Theorem~5.5]{WOC}, ASP is equivalent to the simultaneous validity
of the \(\mathscr{M}_{\alpha}\)-shadowing property for all
\(\alpha\in[0,1)\). By definition, the \(\mathscr{M}_{0}\)-shadowing property
is precisely the \(\underline{d}\)-shadowing property. The following
lemma shows that, for chain mixing systems, the latter is equivalent to the
\(\mathscr{F}_{\underline{d}}\)-shadowing property.

\begin{lemma}[{\cite[Lemma~4]{ODH}}]\label{lem:ODH-equivalence}
Assume that \((X,f)\) is chain mixing. The following conditions are
equivalent:
\begin{enumerate}[label=\textup{(\arabic*)}]
 \item \((X,f)\) has the \(\underline{d}\)-shadowing property;
 \item \((X,f)\) has the \(\mathscr{F}_{\underline{d}}\)-shadowing property.
\end{enumerate}
\end{lemma}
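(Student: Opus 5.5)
The implication (1)$\Rightarrow$(2) is immediate, and the plan for (2)$\Rightarrow$(1) is to repair the rare bad jumps of an ergodic pseudo-orbit with chains of fixed length from Lemma~\ref{lem:uniform-chain}. The repair alters only a density-zero set of coordinates, so positive lower density of the shadowing times survives.

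\emph{(1)$\Rightarrow$(2).} Every \(\delta\)-pseudo-orbit is a \(\delta\)-ergodic pseudo-orbit, since its set of bad indices \(\{i:d(f(x_i),x_{i+1})\geq\delta\}\) is empty. The \(\underline{d}\)-shadowing conclusion is literally the \(\mathscr{F}_{\underline{d}}\)-shadowing conclusion. So this direction needs no chain mixing.

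\emph{(2)$\Rightarrow$(1).} Fix \(\varepsilon>0\) and let \(\delta>0\) be given by the \(\mathscr{F}_{\underline{d}}\)-shadowing property for \(\varepsilon\). Let \(N=N(\delta)\) be given by Lemma~\ref{lem:uniform-chain}. I claim the same \(\delta\) works for \(\underline{d}\)-shadowing. Take a \(\delta\)-ergodic pseudo-orbit \(\{x_i\}\) and let \(B=\{i:d(f(x_i),x_{i+1})\geq\delta\}\), so \(\mathrm{Dens}(B)=0\). Build a genuine \(\delta\)-pseudo-orbit \(\{y_i\}\) greedily, processing the indices of \(B\) in increasing order:
\begin{itemize}
\item Take the first unprocessed \(i\in B\) and, using Lemma~\ref{lem:uniform-chain}, insert points \(\xi_1,\dots,\xi_N\) so that \((x_i,\xi_1,\dots,\xi_N,x_{i+N+1})\) is a \(\delta\)-chain.
\item Set \(y_{i+k}=\xi_k\) for \(1\leq k\leq N\).
\item Discard any elements of \(B\) lying in \([i+1,i+N]\); their jumps have been overwritten.
\item Continue with the next element of \(B\) that is \(\geq i+N+1\).
\end{itemize}
Outside the replaced windows put \(y_j=x_j\). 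Every transition of \(\{y_i\}\) is then either an original good transition or a step of an inserted chain, so \(\{y_i\}\) is a \(\delta\)-pseudo-orbit.

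The modified coordinates lie in \(D=\bigcup_{i\in B}[i+1,i+N]\). We have \(\#(D\cap[0,n])\leq N\,\#(B\cap[0,n])\), hence \(\mathrm{Dens}(D)=0\). By (2) there is \(z\) with \(\underline{\mathrm{Dens}}(G)>0\), where \(G=\{j:d(f^j(z),y_j)<\varepsilon\}\). For \(j\in G\setminus D\) we have \(y_j=x_j\). Removing a density-zero set does not change lower density, so
\[
\underline{\mathrm{Dens}}(\{j:d(f^j(z),x_j)<\varepsilon\})\geq\underline{\mathrm{Dens}}(G\setminus D)=\underline{\mathrm{Dens}}(G)>0 .
\]
This is \(\underline{d}\)-\(\varepsilon\)-shadowing.

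The main point needing care is the bookkeeping of overlapping bad indices in the greedy step. The key is that \(N\) is uniform in the endpoints, which is exactly what Lemma~\ref{lem:uniform-chain} supplies via chain mixing. This uniformity is what makes \(\#(D\cap[0,n])\leq N\,\#(B\cap[0,n])\) hold and keeps \(D\) of density zero.
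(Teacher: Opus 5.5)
Your proof is correct. For $(1)\Rightarrow(2)$, every $\delta$-pseudo-orbit is $\delta$-ergodic. For $(2)\Rightarrow(1)$, the greedy replacement of each retained bad jump by a fixed-length $\delta$-chain from Lemma~\ref{lem:uniform-chain} produces a genuine $\delta$-pseudo-orbit that differs from the original only on the density-zero set $D$, so the positive lower density of the shadowing times survives. The paper gives no proof of its own and simply cites \cite[Lemma~4]{ODH}; your argument is essentially the standard one for this result, which is presumably why the paper records the uniform chain lemma immediately beforehand.
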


\section{Weak mixing for \(M\)-systems with
\(\mathscr{M}_{\alpha}\)-shadowing}\label{sec:factor}

Using the generic factor characterization of weak scattering, we
prove in this section that, for every \(\alpha\in[0,1)\), every \(M\)-system
with the \(\mathscr{M}_{\alpha}\)-shadowing property is weakly mixing. As a
corollary, we answer Question~\ref{ques:ODH} negatively.

\begin{lemma}\label{prop:no-factor}
Suppose that \((X,f)\) has a dense set of minimal points. If
\((X, f)\) has the \(\mathscr{M}_{0}\)-shadowing property, then it
has no nontrivial equicontinuous generic factor.
\end{lemma}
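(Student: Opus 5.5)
The plan is to argue by contradiction. Assume \((X,f)\) is transitive (needed for generic factors to make sense), has dense minimal points and the \(\mathscr{M}_{0}\)-shadowing property, and that \(\pi\colon\mathrm{Tran}(X,f)\to\mathrm{Tran}(Y,g)\) is a generic homomorphism onto a nontrivial equicontinuous transitive system \((Y,g)\).

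\textbf{Setting up the factor.} A transitive equicontinuous system is minimal. After replacing the metric by the equivalent metric \(\rho(y,y')=\sup_{n\ge0}\rho_{0}(g^{n}y,g^{n}y')\), we may assume \(g\) is an isometry, and it is then surjective by minimality. Since \(Y\) is nontrivial, fix \(\eta>0\) and \(y_1,y_2\in Y\) with \(\rho(y_1,y_2)\ge 4\eta\). Because \(g\) is an isometry, \(\rho(g^{j}y,g^{j}y')=\rho(y,y')\) for all \(j\). The idea is to exploit this ``constant phase difference'' rigidity against a pseudo-orbit that switches phase infinitely often.

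\textbf{The pseudo-orbit.} Pick transitive points \(x,w\in\mathrm{Tran}(X,f)\) with \(\pi(x)\) near \(y_1\) and \(\pi(w)\) near \(y_2\); this is possible because \(\pi(\mathrm{Tran}(X,f))\) is \(g\)-invariant and hence dense. Fix a rapidly increasing sequence \(n_0<n_1<\cdots\) with \(n_{k+1}/n_k\to\infty\), and set
\[
x_i=\begin{cases} f^{i}(x), & i\in[n_{2m},n_{2m+1}),\\ f^{i}(w), & i\in[n_{2m+1},n_{2m+2}).\end{cases}
\]
The only defects \(d(f(x_i),x_{i+1})\ge\delta\) occur at the switching times \(n_k\). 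These form a density-zero set, so \(\{x_i\}\) is a \(\delta\)-ergodic pseudo-orbit for every \(\delta>0\); no chain mixing is needed. By the \(\mathscr{M}_{0}\)-shadowing property, some \(z\) \(\varepsilon\)-shadows it on a set \(J\) of positive lower density. Because the blocks grow super-geometrically, \(J\) meets both infinitely many \(x\)-blocks and infinitely many \(w\)-blocks in a positive proportion of their length.

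\textbf{The main obstacle.} Closeness \(d(f^{j}z,f^{j}x)<\varepsilon\) in \(X\) must be turned into \(\rho(\pi f^{j}z,\pi f^{j}x)<\eta\) in \(Y\). Two difficulties arise: \(\pi\) is only defined and continuous on \(\mathrm{Tran}(X,f)\), not uniformly continuous, and \(z\) need not be transitive. This is where the density of minimal points will be used; I would try two things.
\begin{enumerate}[label=\textup{(\roman*)}]
\item Choose \(x\) and \(w\) so that their orbits spend a syndetic proportion of each block near fixed minimal points \(p\) and \(q\), placed where the local oscillation of \(\pi\) is small. Using syndetic returns of minimal points and transitivity, such transitive points can be built so that their orbits follow the orbits of \(p\) and \(q\) for long stretches. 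Continuity of \(\pi\) at suitable transitive points near \(p\) and \(q\), with \(\varepsilon\) chosen accordingly, then controls \(\rho\) at a positive-density subset of shadowing times.
\item Replace \(z\) by a transitive point \(z'\) close to it. Transitive points are dense, and shadowing on finite windows is an open condition, so this can be done block by block via a diagonal argument. This keeps \(\pi(z')\) defined.
\end{enumerate}

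\textbf{Conclusion.} Once this is done, isometry gives \(\rho(\pi z',\pi x)=\rho(\pi f^{j}z',\pi f^{j}x)<\eta\) for some \(j\) in an \(x\)-block, and similarly \(\rho(\pi z',\pi w)<\eta\) using a \(w\)-block. Hence \(\rho(\pi x,\pi w)<2\eta\), which contradicts \(\rho(\pi x,\pi w)\ge 4\eta-o(1)\). Therefore \((X,f)\) has no nontrivial equicontinuous generic factor.
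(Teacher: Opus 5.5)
Your framework is the same as the paper's: an isometric metric on the minimal factor, a pseudo-orbit switching between two orbits on super-geometrically growing blocks, and transfer of closeness at one time in each kind of block to $\pi(z')$ via the isometry. You assume transitivity, which the paper derives from Theorem~7.2 of Wu, Oprocha and Chen. The problem is that the step you call the main obstacle is the whole content of the lemma, and your remedy (i) is only asserted.

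To make (i) work you would need transitive points $x,w$, so that $\pi(x),\pi(w)$ are defined, with two properties. First, during \emph{every} late $x$-block (resp.\ $w$-block) of the sequence $(n_k)$ you fixed in advance, the orbit must return to a neighbourhood $V$ of a minimal point on which $\pi$ has small oscillation, with gaps at most a constant $K$. This $K$ must be fixed before $\varepsilon$ and $\delta$, so that $\varepsilon$-closeness at a shadowing time propagates for $K$ steps. Second, $\rho(\pi(x),\pi(w))$ must stay large at the same time. Transitivity and dense minimal points give points that follow a minimal orbit on finitely many windows. The set of points with the required behaviour on all blocks, however, is a $G_\delta$ set around the minimal point with no reason to be dense, so Baire category says nothing about it meeting $\mathrm{Tran}(X,f)$. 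Producing $x$ and $w$ would need an adaptive, interleaved construction of the $n_k$, the two points, visits to a countable base, and the phases $\pi(x),\pi(w)$; none of this is supplied.

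Also, ``a syndetic proportion of each block'' is the wrong requirement. The shadowing set has an unknown, possibly tiny, positive lower density and uncontrolled position. Only a bounded-gap condition on \emph{all} times of the block guarantees a usable shadowing time. Good times of positive or even large density do not suffice at $\alpha=0$; this is exactly what breaks down for $E$-systems (cf.\ Example~\ref{ex:center-endpoint}).

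The missing idea is that you never need $\pi$ at the points generating the pseudo-orbit. Work instead with the closed relation $\Gamma=\overline{\{(x,\pi(x)):x\in\mathrm{Tran}(X,f)\}}$. It is positively $(f\times g)$-invariant, projects onto $X$, has fibre $\{\pi(x)\}$ over each transitive $x$, and is upper semicontinuous.

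Choose a transitive $x_0$, a neighbourhood $O$ of $x_0$ over which all fibres lie in $B(\pi(x_0),\eta)$, and then a minimal point $x_1\in O$. This gives a fibre $\Gamma(x_1)$ of diameter less than $2\eta$, and a neighbourhood $U$ of $x_1$ over which fibres stay $\eta$-close to $\Gamma(x_1)$. Any $y_0\in\Gamma(x_1)$ is an equivariant phase for $x_1$, since $g^{n}(y_0)\in\Gamma(f^{n}(x_1))$. Choosing $r_0$ with $\rho(y_0,g^{r_0}(y_0))>D/3$ supplies the separation you wanted from $y_1,y_2$.

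The pseudo-orbit then alternates between $f^{i}(x_1)$ and $f^{i+r_0}(x_1)$. The $K$-syndetic returns of the minimal point $x_1$ to $V$, with $\overline{V}\subseteq U$, make every shadowing time usable. Only two times $i+s$ and $j+t$ enter the final estimate, so a single transitive $z'$ close to $z$ suffices. Your block-by-block diagonal argument is therefore unnecessary, and as stated it would fail anyway, since a diagonal limit of transitive points need not be transitive.
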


\begin{proof}
Since the \(\mathscr{M}_{0}\)-shadowing property is the
\(\underline{d}\)-shadowing property, \cite[Theorem~7.2]{WOC} shows that
\((X,f)\) is transitive.
Suppose, to the contrary, that \((X, f)\) admits a nontrivial
equicontinuous generic factor \((Y,g)\) with generic homomorphism
\(\pi\colon \mathrm{Tran}(X,f)\longrightarrow \mathrm{Tran}(Y,g)\).
Since every point of \(Y\) is an equicontinuity point,
\cite[Theorem~2.4]{AAB} gives \(\mathrm{Tran}(Y,g)=Y\),
and hence \((Y,g)\) is minimal and equicontinuous.
By \cite[Proposition~2.14]{Sun2020}, there exists a compatible metric
\(\rho\) on \(Y\) for which \(g\) is an isometry.  We use this metric throughout
the proof.

Since \(Y\) is nontrivial, put \(D=\operatorname{diam}(Y)>0\).
Let \(p_{X}\colon X\times Y\longrightarrow X\) be the first-coordinate projection, and let
\[
 \Gamma=\overline{\{(x,\pi(x)):x\in\mathrm{Tran}(X,f)\}}\subseteq X\times Y,
 \quad
 \Gamma(x)=\{y\in Y:(x,y)\in\Gamma\}.
\]
It can be verified that
\begin{itemize}
  \item $\Gamma$ is compact and $p_{X}(\Gamma)=X$;
  \item Since $f\times g$ is continuous, we have $(f\times g)(\Gamma)\subseteq
  \overline{(f\times g)(\{(x,\pi(x)):x\in\mathrm{Tran}(X,f)\})}
  =\overline{\{(f(x), g(\pi(x))): x\in\mathrm{Tran}(X,f)\}}
  =\overline{\{(f(x), \pi(f(x))): x\in\mathrm{Tran}(X,f)\}}\subseteq\Gamma$, i.e., \(\Gamma\) is
positively invariant under \(f\times g\);
  \item For any \(x\in\mathrm{Tran}(X,f)\), it is clear that \(\pi(x)\in\Gamma(x)\).
  Conversely, if \(y\in\Gamma(x)\), then there exists a
sequence \(\{x_{n}\}_{n=1}^{\infty}\subseteq\mathrm{Tran}(X,f)\) such that
\((x_{n},\pi(x_{n}))\longrightarrow (x,y)\), implying
\(\pi(x)=\lim_{n\to \infty}\pi(x_{n})=y\). Therefore, \(\Gamma(x)=\{\pi(x)\}\).
\end{itemize}

Fix a point \(x_{0}\in\mathrm{Tran}(X,f)\) and
set \(\eta=\frac{D}{100}>0\).
Let
\[
 O=X\setminus p_{X}(\Gamma\cap
 (X\times(Y\setminus B(\pi(x_{0}),\eta)))).
\]
Clearly, \(O\) is an open neighborhood of \(x_{0}\) by
\(\Gamma(x_0)=\{\pi(x_0)\}\).
Moreover, let \(u\in O\) and \(y\in\Gamma(u)\).  If
\(y\notin B(\pi(x_{0}),\eta)\), then
\((u,y)\in\Gamma\cap(X\times(Y\setminus B(\pi(x_{0}),\eta)))\),
which implies
\[
 u\in p_{X}(\Gamma\cap
 (X\times(Y\setminus B(\pi(x_{0}),\eta)))),
\]
contrary to \(u\in O\).  Therefore,
\[
 \Gamma(u)\subseteq B(\pi(x_{0}),\eta)\quad (\forall u\in O).
\]
Since the set of minimal points is dense in \(X\), choose a minimal point
\(x_{1}\in O\). This, together with \(p_{X}(\Gamma)=X\) and
\(\Gamma(x_{1})\subseteq B(\pi(x_{0}),\eta)\), implies
\(\Gamma(x_{1})\neq\varnothing\) and
\(\operatorname{diam}(\Gamma(x_{1}))<2\eta\). Fix a point
\(y_{0}\in\Gamma(x_{1})\). Since
\(\overline{\operatorname{Orb}(y_{0},g)}=Y\),
there exists \(r_{0}\in \N\) such that
\begin{equation}\label{eq:separated}
 \rho(y_{0},g^{r_{0}}(y_{0}))>\frac{D}{3}.
\end{equation}

Furthermore, let
\[
 U=X\setminus p_{X}(\Gamma\cap
 (X\times(Y\setminus
 \bigcup_{y\in\Gamma(x_{1})}B(y,\eta)))).
\]
Clearly, \(U\) is an open neighborhood of \(x_{1}\) by
\(\Gamma(x_{1})\subseteq
\bigcup_{y\in\Gamma(x_{1})}B(y,\eta)\).
Moreover, let \(u\in U\) and \(y'\in\Gamma(u)\). If
\(y'\notin\bigcup_{y\in\Gamma(x_{1})}B(y,\eta)\), then
\((u,y')\in\Gamma\cap
(X\times(Y\setminus\bigcup_{y\in\Gamma(x_{1})}B(y,\eta)))\), which implies
\[
 u\in p_{X}(\Gamma\cap
 (X\times(Y\setminus\bigcup_{y\in\Gamma(x_{1})}B(y,\eta)))),
\]
contrary to \(u\in U\). Therefore,
\begin{equation}\label{eq:Gamma-U}
 \Gamma(u)\subseteq\bigcup_{y\in\Gamma(x_{1})}B(y,\eta)
 \quad(\forall u\in U).
\end{equation}
Choose an open neighborhood \(V\) of \(x_{1}\) with \(\overline{V}\subseteq U\).
Since \(x_{1}\) is minimal, there exists \(K\in \N\) such that every interval
of \(K+1\) consecutive iterates of \(x_{1}\) meets \(V\).
Since \(\overline{V}\) is compact and contained in \(U\), one has
\(\operatorname{dist}(\overline{V},X\setminus U)>0\);
and since \(f\) is uniformly continuous, there exists
\(\varepsilon>0\) such that
\begin{equation}\label{eq:propagate}
 d(x,x')<\varepsilon
 \Longrightarrow
 d(f^{k}(x),f^{k}(x'))<\frac{1}{3}\operatorname{dist}(\overline{V},X\setminus U)
 \quad (\forall x,x'\in X,\ \forall k=0,1,\ldots,K).
\end{equation}
By the \(\mathscr{M}_{0}\)-shadowing property, there exists
\(\delta>0\) such that every \(\delta\)-ergodic pseudo-orbit is
\(\mathscr{M}_{0}\)-\(\varepsilon\)-shadowed by some point of \(X\).
Put \(M_{0}=0\) and define inductively
\[
 M_{n+1}=M_{n}+2^{M_{n}+K+1}\quad(n\in\Zp).
\]
Then \(\lim_{n\to\infty}\frac{M_{n}}{M_{n+1}}=0\). Define
\(\xi=\{\xi_{i}\}_{i=0}^{\infty}\) by
\[
\xi_{i}=
\begin{cases}
 f^{i}(x_{1}),
 &i\in\displaystyle\bigcup_{n\in\Zp}[M_{2n},M_{2n+1}),\\
 f^{i+r_{0}}(x_{1}),
 &i\in\displaystyle\bigcup_{n\in\Zp}[M_{2n+1},M_{2n+2}),
\end{cases}
\]
i.e.,
\[
\begin{aligned}
 \xi={}&x_{1},f(x_{1}),\ldots,f^{M_{1}-1}(x_{1}),
 f^{M_{1}+r_{0}}(x_{1}),\ldots,f^{M_{2}-1+r_{0}}(x_{1}),\\
 &f^{M_{2}}(x_{1}),\ldots,f^{M_{3}-1}(x_{1}),
 f^{M_{3}+r_{0}}(x_{1}),\ldots,f^{M_{4}-1+r_{0}}(x_{1}),\ldots.
\end{aligned}
\]
Since \(\{M_{n}-1:n\in\N\}\) has density zero, \(\xi\) is a
\(\delta\)-ergodic pseudo-orbit.
By the \(\mathscr{M}_{0}\)-shadowing property, there exists
\(z\in X\) such that \(\xi\) is
\(\mathscr{M}_{0}\)-\(\varepsilon\)-shadowed by \(z\). Put
\(\Lambda=\{i\in\mathbb{N}:d(f^{i}(z),\xi_{i})<\varepsilon\}\).
Then \(\underline{\mathrm{Dens}}(\Lambda)>0\).
Let
\[
 J_{0}=\bigcup_{n\in \Zp}[M_{2n},M_{2n+1}-K)
 \ \text{ and }\
 J_{1}=\bigcup_{n\in \Zp}[M_{2n+1},M_{2n+2}-K).
\]
The condition
\(\lim_{n\to \infty}\frac{M_{n}}{M_{n+1}}=0\) implies that
\(\overline{\mathrm{Dens}}(J_{0})=\overline{\mathrm{Dens}}(J_{1})=1\).
This, together with \(\underline{\mathrm{Dens}}(\Lambda)>0\) and
\cite[Lemma~1.1]{DH}, implies that both
\(\Lambda\cap J_{0}\) and \(\Lambda\cap J_{1}\) are infinite.
Therefore, there exist indices
\(0<i<j\), with \(i\in\Lambda\cap J_{0}\) and
\(j\in\Lambda\cap J_{1}\), where shadowing takes place; that is,
\[
 d(f^{i}(z),f^{i}(x_{1}))<\varepsilon
 \text{ and }
 d(f^{j}(z),f^{j+r_{0}}(x_{1}))<\varepsilon.
\]
Together with \eqref{eq:propagate}, these inequalities imply
\[
 \begin{aligned}
 d(f^{i+k}(z),f^{i+k}(x_{1}))
 &<\frac{1}{3}\operatorname{dist}(\overline{V},X\setminus U),\\
 d(f^{j+k}(z),f^{j+r_{0}+k}(x_{1}))
 &<\frac{1}{3}\operatorname{dist}(\overline{V},X\setminus U),
 \end{aligned}
 \quad(\forall k=0,1,\ldots,K).
\]
By the choice of \(K\), there exist \(0\leq s,t\leq K\)
such that
\[
 f^{i+s}(x_{1})\in V
 \text{ and }
 f^{j+r_{0}+t}(x_{1})\in V.
\]
Therefore,
\[
f^{i+s}(z),\ f^{j+t}(z)\in U.
\]
Since the transitive points are dense in \(X\), choose
\(z'\in\mathrm{Tran}(X,f)\) sufficiently close to \(z\) that
\[
 f^{i+s}(z'),\ f^{j+t}(z')\in U.
\]
The identities
\(\Gamma(f^{i+s}(z'))
 =\{\pi(f^{i+s}(z'))\}=\{g^{i+s}(\pi(z'))\}\)
 and \(\Gamma(f^{j+t}(z'))
 =\{\pi(f^{j+t}(z'))\}=\{g^{j+t}(\pi(z'))\}\)
(by \(z'\in\mathrm{Tran}(X,f)\))
together with~\eqref{eq:Gamma-U}, give
\begin{equation}\label{eq:g-1}
 g^{i+s}(\pi(z')),\ g^{j+t}(\pi(z'))\in
 \bigcup_{y\in\Gamma(x_{1})}B(y,\eta).
\end{equation}
The positive invariance of \(\Gamma\) and
\((x_{1},y_{0})\in\Gamma\) yield
\[
 g^{i+s}(y_{0})\in\Gamma(f^{i+s}(x_{1}))
 \ \text{ and }\
 g^{j+r_{0}+t}(y_{0})\in\Gamma(f^{j+r_{0}+t}(x_{1})).
\]
As \(f^{i+s}(x_{1}),f^{j+r_{0}+t}(x_{1})\in V\subseteq U\),
another application of~\eqref{eq:Gamma-U} gives
\[
 \Gamma(f^{i+s}(x_{1}))\cup\Gamma(f^{j+r_{0}+t}(x_{1}))
 \subseteq\bigcup_{y\in\Gamma(x_{1})}B(y,\eta),
\]
implying
\begin{equation}\label{eq:g-2}
 g^{i+s}(y_{0}),\ g^{j+r_{0}+t}(y_{0})
 \in\bigcup_{y\in\Gamma(x_{1})}B(y,\eta).
\end{equation}
Using \eqref{eq:g-1}, \eqref{eq:g-2}, the isometry of \(g\),
and \(\operatorname{diam}(\Gamma(x_{1}))<2\eta\), we obtain
\[
 \rho(\pi(z'),y_{0})=\rho(g^{i+s}(\pi(z')), g^{i+s}(y_{0}))
 <2\eta+\operatorname{diam}(\Gamma(x_{1}))<4\eta,
\]
and
\[
 \rho(\pi(z'),g^{r_{0}}(y_{0}))
 =\rho(g^{j+t}(\pi(z')), g^{j+t+r_{0}}(y_{0}))
 <2\eta+\operatorname{diam}(\Gamma(x_{1}))<4\eta.
\]
Consequently,
\[
 \rho(y_{0},g^{r_{0}}(y_{0}))\leq \rho(\pi(z'),y_{0})
 +\rho(\pi(z'),g^{r_{0}}(y_{0}))
 <8\eta=\frac{2D}{25}<\frac{D}{3},
\]
contrary to \eqref{eq:separated}.
Hence $(X, f)$ has no nontrivial equicontinuous generic
factor.
\end{proof}

\begin{theorem}\label{thm:main}
Let \(\alpha\in[0, 1)\). If \((X,f)\) is an \(M\)-system with the
\(\mathscr{M}_{\alpha}\)-shadowing property, then it is weakly mixing.
\end{theorem}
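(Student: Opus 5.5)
The plan is to combine Lemma~\ref{prop:no-factor} with the two results of Huang and Ye, Lemmas~\ref{lem:HY-factor} and~\ref{lem:HY-E}.

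First, the \(\mathscr{M}_{\alpha}\)-shadowing property implies the \(\mathscr{M}_{0}\)-shadowing property. This holds because \(\mathscr{M}_{\alpha}\subseteq\mathscr{M}_{0}\) for every \(\alpha\in[0,1)\): any set of lower density \(>\alpha\) has lower density \(>0\), and the \(\delta\)-ergodic pseudo-orbits being shadowed are the same in both definitions.

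Since \((X,f)\) is an \(M\)-system, it is transitive and its minimal points are dense. Lemma~\ref{prop:no-factor} therefore applies and shows that \((X,f)\) has no nontrivial equicontinuous generic factor. As \((X,f)\) is transitive, Lemma~\ref{lem:HY-factor} then gives that \((X,f)\) is weakly scattering.

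To upgrade weak scattering to weak mixing via Lemma~\ref{lem:HY-E}, we must check that \((X,f)\) is an \(E\)-system, i.e.\ that it carries an invariant measure of full support. This is the only step not read off directly from earlier results, and I expect it to be the main (if standard) obstacle. I would argue as follows:
\begin{enumerate}
\item Each minimal point \(x\) has orbit closure \(\overline{\orb(x,f)}\), a minimal subsystem. By Krylov--Bogolyubov this subsystem carries an invariant measure, and by minimality that measure has support equal to \(\overline{\orb(x,f)}\ni x\). Hence \(x\in\mathrm{supp}(X,f)\).
\item Density of minimal points therefore gives \(\mathrm{supp}(X,f)=X\).
\item By the remark in Section~\ref{sec:prelim} (countable base), there is a single measure in \(\mathcal{M}(X,f)\) with full support. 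Concretely, take a countable family of invariant measures \(\mu_k\) whose supports cover a dense set and form the convex combination \(\sum_k 2^{-k}\mu_k\).
\end{enumerate}
So \((X,f)\) is a transitive system with a fully supported invariant measure, i.e.\ an \(E\)-system. (Alternatively, the well-known fact that every \(M\)-system is an \(E\)-system can be cited.)

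Lemma~\ref{lem:HY-E} now turns weak scattering into weak mixing, which completes the argument.
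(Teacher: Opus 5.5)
Your proposal is correct and follows the paper's proof: pass to $\mathscr{M}_{0}$-shadowing, then use Lemma~\ref{prop:no-factor} and Lemma~\ref{lem:HY-factor} to get weak scattering, and finally Lemma~\ref{lem:HY-E}, using that every $M$-system is an $E$-system. The only difference is that you verify the $M$-system $\Rightarrow$ $E$-system step directly, whereas the paper cites Dayan--Glasner; your argument for it is sound (invariant measures on minimal orbit closures have full support there, followed by a countable convex combination).
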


\begin{proof}
By \eqref{eq:intro-shadowing-chain}, \((X,f)\) has the
\(\mathscr{M}_{0}\)-shadowing property. Then, Lemma~\ref{prop:no-factor} together
with Lemma~\ref{lem:HY-factor} shows that \((X,f)\) is weakly scattering.
Since every \(M\)-system is an \(E\)-system
\cite[Theorem~3.8~\textup{(1)}]{DayanGlasner2015},
Lemma~\ref{lem:HY-E} implies that \((X,f)\) is weakly mixing.
\end{proof}

\begin{corollary}\label{cor:ODH-question}
Suppose that \((X,f)\) is a chain mixing system with a dense set of minimal points.
If \((X,f)\) has the \(\mathscr{F}_{\underline{d}}\)-shadowing property,
then it is weakly mixing.
\end{corollary}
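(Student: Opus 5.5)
The plan is to reduce Corollary~\ref{cor:ODH-question} directly to Theorem~\ref{thm:main} with \(\alpha=0\). There are two ingredients: the \(\mathscr{F}_{\underline{d}}\)-shadowing hypothesis must be converted into the \(\mathscr{M}_{0}\)-shadowing property, and \((X,f)\) must be shown to be an \(M\)-system, i.e.\ transitive with dense minimal points.

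First, since \((X,f)\) is chain mixing, Lemma~\ref{lem:ODH-equivalence} applies. It shows that the \(\mathscr{F}_{\underline{d}}\)-shadowing property is equivalent to the \(\underline{d}\)-shadowing property. By definition, the latter is exactly the \(\mathscr{M}_{0}\)-shadowing property. So \((X,f)\) has \(\mathscr{M}_{0}\)-shadowing.

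Second, for transitivity, I would invoke \cite[Theorem~7.2]{WOC}, as in the proof of Lemma~\ref{prop:no-factor}. It states that density of minimal points together with \(\underline{d}\)-shadowing yields syndetic transitivity of \((X,f^{n})\) for every \(n\); in particular \((X,f)\) is transitive. Alternatively, \cite[Theorem~7]{ODH} gives total transitivity under exactly these hypotheses. Combined with the assumed density of minimal points, \((X,f)\) is an \(M\)-system.

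Finally, Theorem~\ref{thm:main} with \(\alpha=0\) gives weak mixing. The only point needing care is that chain mixing is genuinely used, namely to pass from \(\mathscr{F}_{\underline{d}}\)-shadowing (which concerns genuine \(\delta\)-pseudo-orbits) to shadowing of \(\delta\)-ergodic pseudo-orbits via Lemma~\ref{lem:ODH-equivalence}. I expect no real obstacle beyond checking that the cited results apply under the stated hypotheses. The corollary then answers Question~\ref{ques:ODH} negatively: no such non-weakly mixing system exists.
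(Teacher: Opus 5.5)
Your proposal is correct and follows the paper's proof: you convert \(\mathscr{F}_{\underline{d}}\)-shadowing to \(\mathscr{M}_{0}\)-shadowing via Lemma~\ref{lem:ODH-equivalence}, obtain transitivity so that \((X,f)\) is an \(M\)-system, and apply Theorem~\ref{thm:main} with \(\alpha=0\). The only difference is minor: for transitivity the paper cites \cite[Theorem~7]{ODH}, while your primary citation \cite[Theorem~7.2]{WOC} also works once \(\underline{d}\)-shadowing has been established.
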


\begin{proof}
By \cite[Theorem~7]{ODH}, \((X,f)\) is transitive. Since its
minimal points are dense, \((X,f)\) is an \(M\)-system. Moreover,
Lemma~\ref{lem:ODH-equivalence} together with
\eqref{eq:intro-shadowing-chain} shows that \((X,f)\) has the
\(\mathscr{M}_{0}\)-shadowing property. The conclusion follows directly from
Theorem~\ref{thm:main} with \(\alpha=0\).
\end{proof}

\begin{remark}
Corollary~\ref{cor:ODH-question} gives a direct negative answer to
Question~\ref{ques:ODH}.
\end{remark}

\section{Weak mixing for \(E\)-systems with \(\mathscr{M}_{\alpha}\)-shadowing}

In contrast to the result for \(M\)-systems in
Section~\ref{sec:factor}, the corresponding result for \(E\)-systems requires
\(\alpha>0\). More precisely, we prove in this section that, for every
\(\alpha\in(0,1)\), every \(E\)-system
with the \(\mathscr{M}_{\alpha}\)-shadowing property is weakly mixing. We also
construct an \(E\)-system with the \(\mathscr{M}_{0}\)-shadowing property that
is not weakly mixing, showing that the weak-mixing conclusion fails at
\(\alpha=0\).

\begin{lemma}\label{lem:measure-return}
Suppose that the measure center of \((X,f)\) is \(X\).
For every nonempty open set \(U\) of \(X\), there exist an ergodic measure
\(\nu\in\mathcal{M}(X,f)\) and a \(\nu\)-generic point
\(x\in U\cap\operatorname{supp}(\nu)\). Moreover, if
\(V\) is an open neighborhood of \(x\) with \(\nu(\partial V)=0\), then
the density
\(\mathrm{Dens}(\bigcup_{0\leq s\leq K}((N_{f}(x,V)-s)\cap\N))\)
exists for every \(K\in\mathbb{N}\), and
\(\lim_{K\to\infty}\mathrm{Dens}(\bigcup_{0\leq s\leq K}
 ((N_{f}(x,V)-s)\cap\N))=1.\)
\end{lemma}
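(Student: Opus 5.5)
Since the measure center is \(X\), there is \(\mu\in\mathcal{M}(X,f)\) with full support, so \(\mu(U)>0\). By Lemma~\ref{lem:Walters-measure-facts}\ref{item:Walters-ergodic-decomposition} there is an ergodic \(\nu\in\mathcal{M}(X,f)\) with \(\nu(U)>0\). The set \(U\cap\operatorname{supp}(\nu)\) also has positive \(\nu\)-measure, because \(\nu(X\setminus\operatorname{supp}(\nu))=0\), which follows from second countability. By Lemma~\ref{lem:Walters-measure-facts}\ref{item:Walters-generic-point}, \(\nu\)-almost every point is \(\nu\)-generic, so we may pick a \(\nu\)-generic point \(x\in U\cap\operatorname{supp}(\nu)\). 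This settles the first assertion.

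For the density statement, fix an open \(V\ni x\) with \(\nu(\partial V)=0\) and \(K\in\N\). Set \(W_{K}=\bigcup_{s=0}^{K}f^{-s}(V)\). Then \(n\in\bigcup_{0\le s\le K}((N_{f}(x,V)-s)\cap\N)\) if and only if \(n\geq1\) and \(f^{n}(x)\in W_{K}\), up to the harmless constraint \(n+s\ge1\), which only affects finitely many \(n\). Each \(f^{-s}(V)\) is open with \(\partial f^{-s}(V)\subseteq f^{-s}(\partial V)\) by continuity, and \(\nu(f^{-s}(\partial V))=\nu(\partial V)=0\) by invariance. Since \(\partial W_{K}\subseteq\bigcup_{s}\partial f^{-s}(V)\), we get \(\nu(\partial W_{K})=0\). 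Genericity of \(x\) gives \(\frac1m\sum_{n<m}\delta_{f^{n}(x)}\to\nu\) weak-\(*\). Lemma~\ref{lem:Walters-measure-facts}\ref{item:Walters-continuity-set} then shows that \(\frac1m\#\{0\le n<m: f^{n}(x)\in W_{K}\}\to\nu(W_{K})\). Hence the density exists and equals \(\nu(W_{K})\); removing \(n=0\) does not change it.

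For the limit, note that \(W_{K}\) increases in \(K\) to \(W=\bigcup_{s\ge0}f^{-s}(V)\), so \(\nu(W_{K})\to\nu(W)\) by continuity of measure. Since \(x\in\operatorname{supp}(\nu)\) and \(V\) is a neighborhood of \(x\), we have \(\nu(V)>0\). The set \(W\) satisfies \(f^{-1}(W)\subseteq W\), and \(\nu(f^{-1}W)=\nu(W)\) by invariance. Therefore \(\nu(W\,\triangle\, f^{-1}W)=0\), and ergodicity forces \(\nu(W)\in\{0,1\}\). As \(\nu(W)\ge\nu(V)>0\), we conclude \(\nu(W)=1\).

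The only delicate points are the boundary estimate for \(W_{K}\) and the choice of \(x\) in \(\operatorname{supp}(\nu)\), which is what ensures \(\nu(V)>0\). I expect the boundary bookkeeping to be the main place requiring care, though it is routine.
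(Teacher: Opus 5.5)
Your proof is correct and follows the paper's argument essentially step for step. You use the same ergodic $\nu$ with $\nu(U)>0$ and a generic point in $U\cap\operatorname{supp}(\nu)$, the same set $W_K=\bigcup_{s=0}^{K}f^{-s}(V)$ with the boundary estimate and the continuity-set lemma giving density $\nu(W_K)$, and the same ergodicity argument showing $\nu\bigl(\bigcup_K W_K\bigr)=1$. (Incidentally, $\bigcup_{0\le s\le K}((N_f(x,V)-s)\cap\N)=\{n\in\N: f^n(x)\in W_K\}$ holds exactly, since $n\ge 1$ already forces $n+s\ge 1$, so your hedge about finitely many exceptional $n$ is unnecessary.)
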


\begin{proof}
There exists \(\mu\in\mathcal{M}(X,f)\) such that \(\mu(U)>0\)
by \(\mathrm{supp}(X,f)=X\). By
Lemma~\ref{lem:Walters-measure-facts}~%
\ref{item:Walters-ergodic-decomposition}, there exists an ergodic measure
\(\nu\in\mathcal{M}(X,f)\) with \(\nu(U)>0\). Since the set of
\(\nu\)-generic points has full \(\nu\)-measure by
Lemma~\ref{lem:Walters-measure-facts}~%
\ref{item:Walters-generic-point}, and
\(\operatorname{supp}(\nu)\) also has full \(\nu\)-measure, one can choose a
\(\nu\)-generic point
\(x\in U\cap\operatorname{supp}(\nu)\).

For any \(K\in\mathbb{N}\), put \(C_{K}=\bigcup_{s=0}^{K}f^{-s}(V).\)
Since
\(\partial C_{K}
 \subseteq\bigcup_{s=0}^{K}\partial(f^{-s}(V))
 \subseteq\bigcup_{s=0}^{K}f^{-s}(\partial V),\)
the invariance of \(\nu\) gives
\(\nu(\partial C_{K})
 \leq\sum_{s=0}^{K}\nu(f^{-s}(\partial V))
 =(K+1)\nu(\partial V)=0.\)
Moreover,
\[
 \bigcup_{0\leq s\leq K}((N_{f}(x,V)-s)\cap\N)
 =\{n\in\N:f^{n}(x)\in C_{K}\}.
\]
Since \(x\) is \(\nu\)-generic,
\(\frac{1}{m}\sum_{n=0}^{m-1}\delta_{f^{n}(x)}
 \overset{w^{*}}{\longrightarrow}\nu
 \quad\text{as }m\longrightarrow\infty.\)
This, together with \(\nu(\partial C_{K})=0\)
and Lemma~\ref{lem:Walters-measure-facts}~%
\ref{item:Walters-continuity-set}, implies
\(\lim_{m\to\infty}\frac{1}{m}
 \#\{0\leq n<m:f^{n}(x)\in C_{K}\}
 =\nu(C_{K}).\)
By the preceding set equality, this yields
\[
 \mathrm{Dens}\Bigg(\bigcup_{0\leq s\leq K}
 ((N_{f}(x,V)-s)\cap\N)\Bigg)=\nu(C_{K}).
\]
Let \(C_{\infty}=\bigcup_{K\in\mathbb{N}}C_{K}\). Since
\(x\in\operatorname{supp}(\nu)\) and \(x\in
V\subseteq C_{\infty}\), one has \(\nu(C_{\infty})
\geq \nu(V)>0\). Moreover,
\(f^{-1}(C_{\infty})\subseteq C_{\infty}\), while invariance gives
\(\nu(f^{-1}(C_{\infty}))=\nu(C_{\infty})\). Hence ergodicity
of \(\nu\) yields
\(\nu(C_{\infty})=1\). Finally, since \(C_{K}\) increases to
\(C_{\infty}\), we have
\(\lim_{K\to\infty}\mathrm{Dens}(\bigcup_{0\leq s\leq K}
 ((N_{f}(x,V)-s)\cap\N))
 =\lim_{K\to\infty}\nu(C_{K})=\nu(C_{\infty})=1.\)
\end{proof}

\begin{proposition}\label{prop:center-transitive}
Let \(\alpha\in(0,1)\). If the measure center of \((X,f)\) is \(X\) and
\((X,f)\) has the \(\mathscr{M}_{\alpha}\)-shadowing property, then
\((X,f)\) is transitive.
\end{proposition}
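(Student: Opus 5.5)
Given nonempty open sets \(U_{1},U_{2}\subseteq X\), the aim is to find \(n\in\N\) with \(f^{n}(U_{1})\cap U_{2}\neq\varnothing\). The plan is to build a \(\delta\)-ergodic pseudo-orbit that alternates, on long blocks, between the orbit of a generic point in \(U_{1}\) and the orbit of a generic point in \(U_{2}\). The \(\mathscr{M}_{\alpha}\)-shadowing point will be forced to trace both, and hence to visit both sets in the right order.

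\emph{Choice of generic points.} By Lemma~\ref{lem:measure-return}, for \(\ell=1,2\) I choose an ergodic measure \(\nu_{\ell}\) and a \(\nu_{\ell}\)-generic point \(x_{\ell}\in U_{\ell}\cap\operatorname{supp}(\nu_{\ell})\). By Lemma~\ref{lem:zero-boundary-neighborhood}, I then take an open \(V_{\ell}\ni x_{\ell}\) with \(\overline{V_{\ell}}\subseteq U_{\ell}\) and \(\nu_{\ell}(\partial V_{\ell})=0\). Let \(\theta=\operatorname{dist}(\overline{V_{1}}\cup\overline{V_{2}}\text{-pieces},\text{complements})\), that is, the minimum of \(\operatorname{dist}(\overline{V_{\ell}},X\setminus U_{\ell})\), which is positive. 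The second part of Lemma~\ref{lem:measure-return} gives a \(K\) such that
\[
D_{\ell}=\bigcup_{0\leq s\leq K}\big((N_{f}(x_{\ell},V_{\ell})-s)\cap\N\big)
\]
has density greater than \(1-\frac{\alpha}{4}\) for \(\ell=1,2\). This is exactly where \(\alpha>0\) is used. By uniform continuity, I pick \(\varepsilon>0\) such that \(d(x,x')<\varepsilon\) implies \(d(f^{k}x,f^{k}x')<\theta\) for \(0\leq k\leq K\). Finally, let \(\delta\) be given by \(\mathscr{M}_{\alpha}\)-shadowing for this \(\varepsilon\).

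\emph{Pseudo-orbit and density count.} As in Lemma~\ref{prop:no-factor}, I take rapidly growing \(M_{n}\) with \(M_{n}/M_{n+1}\to0\) and set
\[
\xi_{i}=f^{i}(x_{1})\ \text{for } i\in[M_{2n},M_{2n+1}),\qquad \xi_{i}=f^{i}(x_{2})\ \text{for } i\in[M_{2n+1},M_{2n+2}).
\]
The jumps occur only at the density-zero set \(\{M_{n}-1\}\), so \(\xi\) is a \(\delta\)-ergodic pseudo-orbit. Let \(z\) be a shadowing point and \(\Lambda\) its set of shadowing times, so that \(\underline{\mathrm{Dens}}(\Lambda)>\alpha\).

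Consider a block \([M_{2n},M_{2n+1})\) with \(n\) large. It occupies almost all of \([0,M_{2n+1})\), and \(D_{1}\) has density close to \(1-\alpha/4\) on that initial segment. Since \(\Lambda\) has proportion greater than \(\alpha\) there, \(\Lambda\cap D_{1}\cap[M_{2n},M_{2n+1}-K)\neq\varnothing\). By the same count, \(\Lambda\cap D_{2}\) meets the next block. The hardest step is making this counting precise: the density of \(D_{\ell}\) is a limit along all \(m\), so I must apply it at the endpoints \(M_{2n+1}\) and \(M_{2n+2}\) and subtract the negligible initial part of length \(M_{2n}\).

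\emph{Extracting the return time.} Take \(i\) in the first block with \(i\in\Lambda\cap D_{1}\), and some \(s\leq K\) with \(f^{i+s}(x_{1})\in V_{1}\). Shadowing gives \(d(f^{i}(z),f^{i}(x_{1}))<\varepsilon\), so \(f^{i+s}(z)\in U_{1}\). Similarly, take \(j>i+K\) in the next block with some \(t\leq K\) such that \(f^{j+t}(z)\in U_{2}\). Setting \(w=f^{i+s}(z)\in U_{1}\), we get \(f^{(j+t)-(i+s)}(w)\in U_{2}\) with \((j+t)-(i+s)>0\). Hence \(N_{f}(U_{1},U_{2})\neq\varnothing\), and \((X,f)\) is transitive.
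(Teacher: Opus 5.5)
Your proposal is correct and follows essentially the same route as the paper: generic points in \(U_{1},U_{2}\) from Lemma~\ref{lem:measure-return}, zero-boundary neighborhoods \(V_{\ell}\) whose \(K\)-thickened return sets have density above a threshold that requires \(\alpha>0\), an alternating \(\delta\)-ergodic pseudo-orbit on blocks \([M_{n},M_{n+1})\) with \(M_{n}/M_{n+1}\to0\), and one shadowing time in each block type, which together yield a positive return time. The differences are cosmetic: you use the threshold \(1-\alpha/4\) instead of \(1-\alpha\), and you count directly on \([0,M_{2n+1})\) rather than combining \(\overline{\mathrm{Dens}}(J_{r})=1\) with \cite[Lemma~1.1]{DH}.
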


\begin{proof}
Let \(U_{0},U_{1}\subseteq X\) be nonempty open sets. By
Lemma~\ref{lem:measure-return}, for \(r=0,1\), there exist an ergodic measure
\(\nu_{r}\) and a \(\nu_{r}\)-generic point
\(x_{r}\in U_{r}\cap\operatorname{supp}(\nu_{r})\). By
Lemma~\ref{lem:zero-boundary-neighborhood}, for \(r=0,1\), there exists an
open neighborhood \(V_{r}\) of \(x_{r}\) such that
\(\overline{V_{r}}\subseteq U_{r}\) and \(\nu_{r}(\partial V_{r})=0\).
By Lemma~\ref{lem:measure-return}, there exists \(K\in\N\) such that
\begin{equation}\label{eq:return-density}
 \mathrm{Dens}\Bigg(\bigcup_{0\leq s\leq K}
 ((N_{f}(x_{r},V_{r})-s)\cap\N)\Bigg)>1-\alpha
 \quad(r=0,1).
\end{equation}
Since \(\overline{V_{r}}\) is compact and contained in \(U_{r}\), one has
\(\operatorname{dist}(\overline{V_{r}},X\setminus U_{r})>0\) for \(r=0,1\).
Since \(f\) is uniformly continuous, there exists \(\varepsilon>0\) such that,
for $r=0, 1$,
\begin{equation}\label{eq:transitive-propagate}
 d(x,x')<\varepsilon
 \Longrightarrow
 d(f^{k}(x),f^{k}(x'))<
 \frac{1}{3}\operatorname{dist}(\overline{V_{r}},X\setminus U_{r})
 \quad(\forall x,x'\in X,\ \forall k=0,1,\ldots,K).
\end{equation}
By the \(\mathscr{M}_{\alpha}\)-shadowing property, there exists
\(\delta>0\) such that every \(\delta\)-ergodic pseudo-orbit is
\(\mathscr{M}_{\alpha}\)-\(\varepsilon\)-shadowed by some point of \(X\).
Put \(M_{0}=0\) and define inductively
\[
 M_{n+1}=M_{n}+2^{M_{n}+K+1}\quad(n\in\Zp).
\]
Then \(\lim_{n\to\infty}\frac{M_{n}}{M_{n+1}}=0\). Define
\(\xi=\{\xi_{i}\}_{i=0}^{\infty}\) by
\[
 \xi_{i}=
 \begin{cases}
  f^{i}(x_{0}),
  &i\in\displaystyle\bigcup_{n\in\Zp}[M_{2n},M_{2n+1}),\\
  f^{i}(x_{1}),
 &i\in\displaystyle\bigcup_{n\in\Zp}[M_{2n+1},M_{2n+2}).
 \end{cases}
\]
i.e.,
\[
\begin{aligned}
 \xi={}&x_{0},f(x_{0}),\ldots,f^{M_{1}-1}(x_{0}),
 f^{M_{1}}(x_{1}),\ldots,f^{M_{2}-1}(x_{1}),\\
 &f^{M_{2}}(x_{0}),\ldots,f^{M_{3}-1}(x_{0}),
 f^{M_{3}}(x_{1}),\ldots,f^{M_{4}-1}(x_{1}),\ldots.
\end{aligned}
\]
Since \(\{M_{n}-1:n\in\N\}\) has density zero, \(\xi\) is a
\(\delta\)-ergodic pseudo-orbit.
By the \(\mathscr{M}_{\alpha}\)-shadowing property, there exists
\(z\in X\) such that \(\xi\) is
\(\mathscr{M}_{\alpha}\)-\(\varepsilon\)-shadowed by \(z\). Put
\(\Lambda=\{i\in\mathbb{N}:d(f^{i}(z),\xi_{i})<\varepsilon\}\).
Then \(\underline{\mathrm{Dens}}(\Lambda)>\alpha\).
Let
\[
 J_{0}=\bigcup_{n\in \Zp}[M_{2n},M_{2n+1}-K)
 \text{ and }
 J_{1}=\bigcup_{n\in \Zp}[M_{2n+1},M_{2n+2}-K).
\]
The condition
\(\lim_{n\to\infty}\frac{M_{n}}{M_{n+1}}=0\) implies that
\(\overline{\mathrm{Dens}}(J_{0})=\overline{\mathrm{Dens}}(J_{1})=1\). By
\eqref{eq:return-density}, for \(r=0,1\),
\[
\begin{aligned}
 &\underline{\mathrm{Dens}}\Bigg(\Lambda\cap
   \bigcup_{0\leq s\leq K}
   ((N_{f}(x_{r},V_{r})-s)\cap\N)\Bigg)\\
 &\quad\geq \underline{\mathrm{Dens}}(\Lambda)
   +\mathrm{Dens}\Bigg(\bigcup_{0\leq s\leq K}
   ((N_{f}(x_{r},V_{r})-s)\cap\N)\Bigg)-1>0.
\end{aligned}
\]
This, together with \(\overline{\mathrm{Dens}}(J_{0})=\overline{\mathrm{Dens}}(J_{1})=1\) and
\cite[Lemma~1.1]{DH}, implies that
both \(\Lambda\cap J_{0}\cap\bigcup_{0\leq s\leq K}
((N_{f}(x_{0},V_{0})-s)\cap\N)\) and
\(\Lambda\cap J_{1}\cap\bigcup_{0\leq s\leq K}
((N_{f}(x_{1},V_{1})-s)\cap\N)\) are infinite.
Therefore, there exist indices \(0<i<j\) satisfying \(j-i>K\), with
\(i\in\Lambda\cap J_{0}\cap\bigcup_{0\leq s\leq K}
((N_{f}(x_{0},V_{0})-s)\cap\N)\) and
\(j\in\Lambda\cap J_{1}\cap\bigcup_{0\leq s\leq K}
((N_{f}(x_{1},V_{1})-s)\cap\N)\), where shadowing takes place; that is,
\[
 d(f^{i}(z),f^{i}(x_{0}))<\varepsilon
 \ \text{ and }\
 d(f^{j}(z),f^{j}(x_{1}))<\varepsilon.
\]
Together with \eqref{eq:transitive-propagate}, these inequalities imply
\[
 \begin{aligned}
 d(f^{i+k}(z),f^{i+k}(x_{0}))
 &<\frac{1}{3}\operatorname{dist}(\overline{V_{0}},X\setminus U_{0}),\\
 d(f^{j+k}(z),f^{j+k}(x_{1}))
 &<\frac{1}{3}\operatorname{dist}(\overline{V_{1}},X\setminus U_{1}),
 \end{aligned}
 \quad(\forall k=0,1,\ldots,K).
\]
By the choice of \(i\) and \(j\), there exist \(0\leq s,t\leq K\) such that
\(f^{i+s}(x_{0})\in V_{0}\) and \(f^{j+t}(x_{1})\in V_{1}\).
Therefore,
\[
 f^{i+s}(z)\in U_{0}
 \ \text{ and }\
 f^{j+t}(z)\in U_{1}.
\]
Since \(j-i>K\) and \(0\leq s,t\leq K\), one has
\((j+t)-(i+s)\in\N\), and thus,
\[
 f^{j+t}(z)
 =f^{(j+t)-(i+s)}(f^{i+s}(z))
 \in f^{(j+t)-(i+s)}(U_{0})\cap U_{1}
 \neq\varnothing.
\]
This proves that \((X,f)\) is transitive.
\end{proof}

The proof of the following lemma is analogous to that of
Lemma~\ref{prop:no-factor}; we include the details for completeness.

\begin{lemma}\label{lem:center-no-factor}
Let \(\alpha\in(0,1)\). Suppose that the measure center of \((X,f)\) is
\(X\). If \((X,f)\) has the \(\mathscr{M}_{\alpha}\)-shadowing property,
then it has no nontrivial equicontinuous generic factor.
\end{lemma}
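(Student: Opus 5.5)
The plan is to follow the proof of Lemma~\ref{prop:no-factor} almost verbatim. The only change is that the minimal point \(x_{1}\) is replaced by a generic point of an ergodic measure, supplied by Lemma~\ref{lem:measure-return}. The syndetic-return argument is then replaced by the density estimate already used in Proposition~\ref{prop:center-transitive}.

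First, Proposition~\ref{prop:center-transitive} shows that \((X,f)\) is transitive, so generic factors make sense. Suppose, for a contradiction, that \(\pi\colon\mathrm{Tran}(X,f)\to\mathrm{Tran}(Y,g)\) is a generic homomorphism onto a nontrivial equicontinuous system. As before:
\begin{itemize}
\item \((Y,g)\) is minimal, and we may take \(g\) to be an isometry for a compatible metric \(\rho\);
\item set \(D=\operatorname{diam}(Y)\) and \(\eta=D/100\);
\item let \(\Gamma\) be the closure of the graph of \(\pi\), which is compact, positively invariant, satisfies \(p_{X}(\Gamma)=X\), and has \(\Gamma(x)=\{\pi(x)\}\) for transitive \(x\).
\end{itemize}
Fix \(x_{0}\in\mathrm{Tran}(X,f)\) and build the open neighborhood \(O\) of \(x_{0}\) with \(\Gamma(u)\subseteq B(\pi(x_{0}),\eta)\) for all \(u\in O\).

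Now apply Lemma~\ref{lem:measure-return} with \(U=O\). This gives an ergodic \(\nu\) and a \(\nu\)-generic point \(x_{1}\in O\cap\operatorname{supp}(\nu)\). Hence \(\operatorname{diam}\Gamma(x_{1})<2\eta\). Choose \(y_{0}\in\Gamma(x_{1})\) and \(r_{0}\) with \(\rho(y_{0},g^{r_{0}}(y_{0}))>D/3\). Define \(U\supseteq\Gamma\)-neighborhood exactly as in \eqref{eq:Gamma-U}. By Lemma~\ref{lem:zero-boundary-neighborhood}, pick \(V\ni x_{1}\) with \(\overline V\subseteq U\) and \(\nu(\partial V)=0\).

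For the return sets, use two facts:
\begin{itemize}
\item \(x_{1}\) is \(\nu\)-generic, hence so is \(f^{r_{0}}(x_{1})\);
\item \(N_{f}(f^{r_{0}}(x_{1}),V)=(N_{f}(x_{1},V)-r_{0})\cap\N\).
\end{itemize}
By Lemma~\ref{lem:measure-return}, choose \(K\) so that both
\[
A_{0}=\bigcup_{0\le s\le K}((N_{f}(x_{1},V)-s)\cap\N),\qquad A_{1}=\bigcup_{0\le s\le K}((N_{f}(x_{1},V)-r_{0}-s)\cap\N)
\]
have density \(>1-\alpha\). Then choose \(\varepsilon\) via uniform continuity as in \eqref{eq:propagate}, take \(\delta\) from \(\mathscr{M}_{\alpha}\)-shadowing, and form the same pseudo-orbit \(\xi\). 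It follows \(f^{i}(x_{1})\) on even blocks \([M_{2n},M_{2n+1})\) and \(f^{i+r_{0}}(x_{1})\) on odd blocks, so it is a \(\delta\)-ergodic pseudo-orbit. Let \(z\) shadow it with shadowing set \(\Lambda\), where \(\underline{\mathrm{Dens}}(\Lambda)>\alpha\).

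Here lies the main obstacle, and the reason \(\alpha>0\) is needed. Since \(\underline{\mathrm{Dens}}(\Lambda)+\mathrm{Dens}(A_{r})-1>0\), the set \(\Lambda\cap A_{r}\) has positive lower density. Since \(\overline{\mathrm{Dens}}(J_{r})=1\), \cite[Lemma~1.1]{DH} makes \(\Lambda\cap A_{0}\cap J_{0}\) and \(\Lambda\cap A_{1}\cap J_{1}\) infinite. This produces \(i\in\Lambda\cap A_{0}\cap J_{0}\) and \(j\in\Lambda\cap A_{1}\cap J_{1}\), together with \(s,t\le K\) such that \(f^{i+s}(x_{1})\in V\) and \(f^{j+r_{0}+t}(x_{1})\in V\).

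The rest proceeds exactly as in Lemma~\ref{prop:no-factor}. The propagation estimate gives \(f^{i+s}(z),f^{j+t}(z)\in U\), and we perturb \(z\) to a transitive point \(z'\) keeping these memberships. Then \eqref{eq:Gamma-U} is applied both to the orbit of \(\pi(z')\) and to \(g^{i+s}(y_{0})\) and \(g^{j+r_{0}+t}(y_{0})\), which lie in \(\Gamma(f^{i+s}(x_{1}))\) and \(\Gamma(f^{j+r_{0}+t}(x_{1}))\) by positive invariance. Using that \(g\) is an isometry, this yields \(\rho(\pi(z'),y_{0})<4\eta\) and \(\rho(\pi(z'),g^{r_{0}}y_{0})<4\eta\). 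Hence \(\rho(y_{0},g^{r_{0}}y_{0})<8\eta<D/3\), a contradiction.
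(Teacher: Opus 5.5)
Your proposal is correct and follows the paper's own proof: a $\nu$-generic $x_1\in O\cap\operatorname{supp}(\nu)$ from Lemma~\ref{lem:measure-return}, a neighborhood $V$ with $\nu(\partial V)=0$, the two return sets $R_K^0,R_K^1$ (your $A_0,A_1$), and the same density intersection argument; the only cosmetic difference is that the paper takes densities $>1-\alpha/2$ where your $>1-\alpha$ already suffices. One small tidy-up: Lemma~\ref{lem:measure-return} assumes $V$ is a neighborhood of the generic point, which need not hold for $f^{r_0}(x_1)$, so it is cleaner to observe that $A_1=(A_0-r_0)\cap\N$ and therefore has the same density as $A_0$.
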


\begin{proof}
By Proposition~\ref{prop:center-transitive}, \((X,f)\) is transitive.
Suppose, to the contrary, that \((X,f)\) admits a nontrivial
equicontinuous generic factor \((Y,g)\) with generic homomorphism
\(\pi\colon\mathrm{Tran}(X,f)\longrightarrow\mathrm{Tran}(Y,g)\).
Since every point of \(Y\) is an equicontinuity point,
\cite[Theorem~2.4]{AAB} gives \(\mathrm{Tran}(Y,g)=Y\),
and hence \((Y,g)\) is minimal and equicontinuous.
By \cite[Proposition~2.14]{Sun2020}, there exists a compatible metric
\(\rho\) on \(Y\) for which \(g\) is an isometry. We use this metric throughout
the proof.

Since \(Y\) is nontrivial, put \(D=\operatorname{diam}(Y)>0\).
Let \(p_{X}\colon X\times Y\longrightarrow X\) be the first-coordinate projection, and let
\[
 \Gamma=\overline{\{(x,\pi(x)):x\in\mathrm{Tran}(X,f)\}}\subseteq X\times Y,
 \quad
 \Gamma(x)=\{y\in Y:(x,y)\in\Gamma\}.
\]
As in the proof of Lemma~\ref{prop:no-factor}, the relation
\(\Gamma\) has the following properties:
\begin{itemize}
  \item \(\Gamma\) is compact and \(p_{X}(\Gamma)=X\);
  \item \(\Gamma\) is positively invariant under \(f\times g\);
  \item For any \(x\in\mathrm{Tran}(X,f)\),
  \(\Gamma(x)=\{\pi(x)\}\).
\end{itemize}

Fix a point \(x_{0}\in\mathrm{Tran}(X,f)\) and
set \(\eta=\frac{D}{100}>0\). Let
\[
 O=X\setminus p_{X}(\Gamma\cap
 (X\times(Y\setminus B(\pi(x_{0}),\eta)))).
\]
Clearly, \(O\) is an open neighborhood of \(x_{0}\) by
\(\Gamma(x_{0})=\{\pi(x_{0})\}\).
Moreover, let \(u\in O\) and \(y\in\Gamma(u)\). If
\(y\notin B(\pi(x_{0}),\eta)\), then
\((u,y)\in\Gamma\cap(X\times(Y\setminus B(\pi(x_{0}),\eta)))\), which implies
\[
 u\in p_{X}(\Gamma\cap
 (X\times(Y\setminus B(\pi(x_{0}),\eta)))),
\]
contrary to \(u\in O\). Therefore,
\[
 \Gamma(u)\subseteq B(\pi(x_{0}),\eta)\quad(\forall u\in O).
\]
By Lemma~\ref{lem:measure-return}, there exist an ergodic measure
\(\nu\in\mathcal{M}(X,f)\) and a \(\nu\)-generic point
\(x_{1}\in O\cap\operatorname{supp}(\nu)\). This, together with
\(p_{X}(\Gamma)=X\) and
\(\Gamma(x_{1})\subseteq B(\pi(x_{0}),\eta)\), implies
\(\Gamma(x_{1})\neq\varnothing\) and
\(\operatorname{diam}(\Gamma(x_{1}))<2\eta\). Fix a point
\(y_{0}\in\Gamma(x_{1})\). Since
\(\overline{\operatorname{Orb}(y_{0},g)}=Y\), there exists
\(r_{0}\in\N\) such that
\begin{equation}\label{eq:center-separated}
 \rho(y_{0},g^{r_{0}}(y_{0}))>\frac{D}{3}.
\end{equation}

Furthermore, let
\[
 U=X\setminus p_{X}(\Gamma\cap
 (X\times(Y\setminus
 \bigcup_{y\in\Gamma(x_{1})}B(y,\eta)))).
\]
Clearly, \(U\) is an open neighborhood of \(x_{1}\) by
\(\Gamma(x_{1})\subseteq
\bigcup_{y\in\Gamma(x_{1})}B(y,\eta)\).
Moreover, let \(u\in U\) and \(y'\in\Gamma(u)\). If
\(y'\notin\bigcup_{y\in\Gamma(x_{1})}B(y,\eta)\), then
\((u,y')\in\Gamma\cap
(X\times(Y\setminus\bigcup_{y\in\Gamma(x_{1})}B(y,\eta)))\), which implies
\[
 u\in p_{X}(\Gamma\cap
 (X\times(Y\setminus\bigcup_{y\in\Gamma(x_{1})}B(y,\eta)))),
\]
contrary to \(u\in U\). Therefore,
\begin{equation}\label{eq:center-Gamma-U}
 \Gamma(u)\subseteq\bigcup_{y\in\Gamma(x_{1})}B(y,\eta)
 \quad(\forall u\in U).
\end{equation}
By Lemma~\ref{lem:zero-boundary-neighborhood}, there exists an open
neighborhood \(V\) of \(x_{1}\) such that
\(\overline{V}\subseteq U\) and \(\nu(\partial V)=0\). For
\(K\in\N\), put $R_{K}^{0}=\bigcup_{0\leq s\leq K}((N_{f}(x_{1}, V)-s)\cap \N)$
and
$R_{K}^{1}=\bigcup_{0\leq s\leq K}((N_{f}(x_{1}, V)-r_0-s)\cap \N)$.
Applying Lemma~\ref{lem:measure-return} yields
\[
 \lim_{K\to\infty}\mathrm{Dens}(R_{K}^{0})
 =\lim_{K\to\infty}\mathrm{Dens}(R_{K}^{1})=1,
\]
implying that there exists \(K\in\N\) such that
\begin{equation}\label{eq:two-return-densities}
 \mathrm{Dens}(R_{K}^{0})>1-\frac{\alpha}{2}
 \quad\text{and}\quad
 \mathrm{Dens}(R_{K}^{1})>1-\frac{\alpha}{2}.
\end{equation}
Since \(\overline{V}\) is compact and contained in \(U\), one has
\(\operatorname{dist}(\overline{V},X\setminus U)>0\); and
since \(f\) is uniformly continuous, there exists
\(\varepsilon>0\) such that
\begin{equation}\label{eq:center-propagate}
 d(x,x')<\varepsilon
 \Longrightarrow
 d(f^{k}(x),f^{k}(x'))<\frac{1}{3}
 \operatorname{dist}(\overline{V},X\setminus U)
 \quad(\forall x,x'\in X,\ \forall k=0,1,\ldots,K).
\end{equation}
By the \(\mathscr{M}_{\alpha}\)-shadowing property, there exists
\(\delta>0\) such that every \(\delta\)-ergodic pseudo-orbit is
\(\mathscr{M}_{\alpha}\)-\(\varepsilon\)-shadowed by some point of \(X\).
Put \(M_{0}=0\) and define inductively
\[
 M_{n+1}=M_{n}+2^{M_{n}+K+1}\quad(n\in\Zp).
\]
Then \(\lim_{n\to\infty}\frac{M_{n}}{M_{n+1}}=0\). Define
\(\xi=\{\xi_{i}\}_{i=0}^{\infty}\) by
\[
\xi_{i}=
\begin{cases}
 f^{i}(x_{1}),
 &i\in\displaystyle\bigcup_{n\in\Zp}[M_{2n},M_{2n+1}),\\
 f^{i+r_{0}}(x_{1}),
 &i\in\displaystyle\bigcup_{n\in\Zp}[M_{2n+1},M_{2n+2}).
\end{cases}
\]
Since \(\{M_{n}-1:n\in\N\}\) has density zero, \(\xi\) is a
\(\delta\)-ergodic pseudo-orbit.

By the \(\mathscr{M}_{\alpha}\)-shadowing property, there exists
\(z\in X\) such that \(\xi\) is
\(\mathscr{M}_{\alpha}\)-\(\varepsilon\)-shadowed by \(z\). Put
\[
 \Lambda=\{i\in\mathbb{N}:d(f^{i}(z),\xi_{i})<\varepsilon\}.
\]
Then \(\underline{\mathrm{Dens}}(\Lambda)>\alpha\).
Let
\[
 J_{0}=\bigcup_{n\in\Zp}[M_{2n},M_{2n+1}-K)
 \ \text{ and }\
 J_{1}=\bigcup_{n\in\Zp}[M_{2n+1},M_{2n+2}-K).
\]
The condition
\(\lim_{n\to\infty}\frac{M_{n}}{M_{n+1}}=0\) implies that
\(\overline{\mathrm{Dens}}(J_{0})=\overline{\mathrm{Dens}}(J_{1})=1\). By
\eqref{eq:two-return-densities},

\[
\begin{aligned}
 \underline{\mathrm{Dens}}(\Lambda\cap R_{K}^{0})
 &\geq \underline{\mathrm{Dens}}(\Lambda)
   +\mathrm{Dens}(R_{K}^{0})-1
   > \frac{\alpha}{2}>0,\\
 \underline{\mathrm{Dens}}(\Lambda\cap R_{K}^{1})
 &\geq \underline{\mathrm{Dens}}(\Lambda)
   +\mathrm{Dens}(R_{K}^{1})-1
   > \frac{\alpha}{2}>0.
\end{aligned}
\]
These, together with
\(\overline{\mathrm{Dens}}(J_{0})=\overline{\mathrm{Dens}}(J_{1})=1\) and
\cite[Lemma~1.1]{DH}, imply that both
\(\Lambda\cap R_{K}^{0}\cap J_{0}\) and
\(\Lambda\cap R_{K}^{1}\cap J_{1}\) are infinite.
Therefore, there exist indices \(0<i<j\), with
\(i\in\Lambda\cap R_{K}^{0}\cap J_{0}\) and
\(j\in\Lambda\cap R_{K}^{1}\cap J_{1}\), where shadowing takes place; that is,
\[
 d(f^{i}(z),f^{i}(x_{1}))<\varepsilon
 \ \text{ and }\
 d(f^{j}(z),f^{j+r_{0}}(x_{1}))<\varepsilon.
\]
Together with \eqref{eq:center-propagate}, these inequalities imply
\[
 \begin{aligned}
 d(f^{i+k}(z),f^{i+k}(x_{1}))
 &<\frac{1}{3}\operatorname{dist}(\overline{V},X\setminus U),\\
 d(f^{j+k}(z),f^{j+r_{0}+k}(x_{1}))
 &<\frac{1}{3}\operatorname{dist}(\overline{V},X\setminus U),
 \end{aligned}
 \quad(\forall k=0,1,\ldots,K).
\]
By the definitions of \(R_{K}^{0}\) and \(R_{K}^{1}\), there exist
\(0\leq s,t\leq K\) such that
\[
 f^{i+s}(x_{1})\in V
 \ \text{ and }\
 f^{j+r_{0}+t}(x_{1})\in V.
\]
Therefore,
\[
 f^{i+s}(z),\ f^{j+t}(z)\in U.
\]
Since the transitive points are dense in \(X\), choose
\(z'\in\mathrm{Tran}(X,f)\) sufficiently close to \(z\) that
\[
 f^{i+s}(z'),\ f^{j+t}(z')\in U.
\]
The identities
\(\Gamma(f^{i+s}(z'))
=\{\pi(f^{i+s}(z'))\}=\{g^{i+s}(\pi(z'))\}\)
and \(\Gamma(f^{j+t}(z'))
=\{\pi(f^{j+t}(z'))\}=\{g^{j+t}(\pi(z'))\}\)
(by \(z'\in\mathrm{Tran}(X,f)\)) together with
\eqref{eq:center-Gamma-U} give
\begin{equation}\label{eq:center-g-1}
 g^{i+s}(\pi(z')),\ g^{j+t}(\pi(z'))\in
 \bigcup_{y\in\Gamma(x_{1})}B(y,\eta).
\end{equation}
The positive invariance of \(\Gamma\) and
\((x_{1},y_{0})\in\Gamma\) yield
\[
 g^{i+s}(y_{0})\in\Gamma(f^{i+s}(x_{1}))
 \text{ and }
 g^{j+r_{0}+t}(y_{0})\in\Gamma(f^{j+r_{0}+t}(x_{1})).
\]
As \(f^{i+s}(x_{1}),f^{j+r_{0}+t}(x_{1})\in V\subseteq U\), another
application of \eqref{eq:center-Gamma-U} gives
\[
 \Gamma(f^{i+s}(x_{1}))\cup\Gamma(f^{j+r_{0}+t}(x_{1}))
 \subseteq\bigcup_{y\in\Gamma(x_{1})}B(y,\eta),
\]
implying
\begin{equation}\label{eq:center-g-2}
 g^{i+s}(y_{0}),\ g^{j+r_{0}+t}(y_{0})
 \in\bigcup_{y\in\Gamma(x_{1})}B(y,\eta).
\end{equation}
Using \eqref{eq:center-g-1}, \eqref{eq:center-g-2}, the isometry of \(g\),
and \(\operatorname{diam}(\Gamma(x_{1}))<2\eta\), we obtain
\[
 \rho(\pi(z'),y_{0})
 =\rho(g^{i+s}(\pi(z')),g^{i+s}(y_{0}))
 <2\eta+\operatorname{diam}(\Gamma(x_{1}))<4\eta,
\]
and
\[
 \rho(\pi(z'),g^{r_{0}}(y_{0}))
 =\rho(g^{j+t}(\pi(z')),g^{j+t+r_{0}}(y_{0}))
 <2\eta+\operatorname{diam}(\Gamma(x_{1}))<4\eta.
\]
Consequently,
\[
 \rho(y_{0},g^{r_{0}}(y_{0}))
 \leq\rho(\pi(z'),y_{0})+\rho(\pi(z'),g^{r_{0}}(y_{0}))
 <8\eta=\frac{2D}{25}<\frac{D}{3},
\]
contrary to \eqref{eq:center-separated}.
Hence \((X,f)\) has no nontrivial equicontinuous generic factor.
\end{proof}

\begin{theorem}\label{thm:E-system}
Let \(\alpha\in(0,1)\). If \((X,f)\) is an \(E\)-system with the
\(\mathscr{M}_{\alpha}\)-shadowing property, then it is weakly mixing.
\end{theorem}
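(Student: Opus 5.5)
The argument is a short assembly of results already established, following the same pattern as the proof of Theorem~\ref{thm:main}. Since \((X,f)\) is an \(E\)-system, \(\mathcal{M}(X,f)\) contains a measure with full support. As noted in the preliminaries, this is equivalent to \(\mathrm{supp}(X,f)=X\), because \(X\) has a countable base. Thus the measure center of \((X,f)\) is all of \(X\), and this is exactly the hypothesis of Lemma~\ref{lem:center-no-factor}.

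With \(\alpha\in(0,1)\) and the \(\mathscr{M}_{\alpha}\)-shadowing property, Lemma~\ref{lem:center-no-factor} gives that \((X,f)\) has no nontrivial equicontinuous generic factor. The system is transitive, either directly from the definition of \(E\)-system or by Proposition~\ref{prop:center-transitive}. Lemma~\ref{lem:HY-factor} therefore applies and shows that \((X,f)\) is weakly scattering.

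Finally, because \((X,f)\) is an \(E\)-system, Lemma~\ref{lem:HY-E} says that weak scattering is equivalent to weak mixing. Hence \((X,f)\) is weakly mixing.

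There is essentially no obstacle at this stage, since all the work sits in Lemma~\ref{lem:center-no-factor}. The only point to check is that the standing assumptions match: the paper's dynamical systems are compact metric with \(f\) a continuous surjection, as the Huang--Ye results require. The hypothesis \(\alpha>0\) is used only through Lemma~\ref{lem:center-no-factor}, where it ensures that the shadowing set \(\Lambda\) meets the high-density return sets \(R^{0}_{K}\) and \(R^{1}_{K}\) with positive lower density.
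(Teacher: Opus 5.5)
Your proposal is correct and follows the paper's proof step for step. You pass from the \(E\)-system hypothesis to \(\mathrm{supp}(X,f)=X\), then use Lemma~\ref{lem:center-no-factor} and Lemma~\ref{lem:HY-factor} to obtain weak scattering, and finally apply Lemma~\ref{lem:HY-E} to conclude weak mixing.
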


\begin{proof}
Since \((X,f)\) is an \(E\)-system,
\(\mathrm{supp}(X,f)=X\). Lemma~\ref{lem:center-no-factor} together with
Lemma~\ref{lem:HY-factor} shows that \((X,f)\) is weakly scattering. Since
\((X,f)\) is an \(E\)-system, Lemma~\ref{lem:HY-E} then shows that it is weakly
mixing.
\end{proof}

\begin{corollary}\label{cor:center}
Let \(\alpha\in(0,1)\). Suppose that the measure center of
\((X,f)\) is \(X\). If \((X,f)\) has the
\(\mathscr{M}_{\alpha}\)-shadowing property, then it is weakly mixing.
\end{corollary}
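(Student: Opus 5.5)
The plan is to reduce Corollary~\ref{cor:center} to Theorem~\ref{thm:E-system}. The only point to check is that a system whose measure center is all of \(X\) is automatically an \(E\)-system. An \(E\)-system is defined as a transitive system whose set \(\mathcal{M}(X,f)\) contains a measure of full support. So two facts are needed: transitivity, and the existence of a full-support invariant measure.

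\emph{Transitivity.} This is supplied directly by Proposition~\ref{prop:center-transitive}. Its hypotheses are exactly ours: \(\alpha\in(0,1)\), \(\mathrm{supp}(X,f)=X\), and the \(\mathscr{M}_{\alpha}\)-shadowing property.

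\emph{Full-support measure.} The paper notes in Section~\ref{sec:prelim} that, since \(X\) has a countable base, \(\mathrm{supp}(X,f)=X\) is equivalent to the existence of a fully supported measure in \(\mathcal{M}(X,f)\). If I want to spell this out, I would argue as follows.
\begin{itemize}
\item Take a countable base \(\{W_{k}\}_{k\in\N}\) of nonempty open sets.
\item Each \(W_{k}\) meets \(\bigcup_{\mu}\operatorname{supp}(\mu)\), since this union is dense. Hence there is \(\mu_{k}\in\mathcal{M}(X,f)\) with \(\mu_{k}(W_{k})>0\).
\item Set \(\mu=\sum_{k}2^{-k}\mu_{k}\). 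This is an invariant Borel probability measure, being a countable convex combination of invariant ones.
\item \(\mu\) gives positive mass to every \(W_{k}\), and therefore to every nonempty open set. So \(\operatorname{supp}(\mu)=X\).
\end{itemize}

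With both facts in hand, \((X,f)\) is an \(E\)-system with the \(\mathscr{M}_{\alpha}\)-shadowing property for some \(\alpha\in(0,1)\), and Theorem~\ref{thm:E-system} gives weak mixing. Alternatively, the conclusion follows by combining Lemma~\ref{lem:center-no-factor}, Lemma~\ref{lem:HY-factor} and Lemma~\ref{lem:HY-E}.

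There is no real obstacle here. The only step that is not a pure citation is the transitivity needed to enter the \(E\)-system class, and Proposition~\ref{prop:center-transitive} already handles it. This is also where the restriction \(\alpha>0\) is used.
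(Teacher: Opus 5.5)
Your proposal is correct and follows the paper's proof exactly: transitivity comes from Proposition~\ref{prop:center-transitive}, full support makes \((X,f)\) an \(E\)-system, and Theorem~\ref{thm:E-system} finishes. Your construction of \(\mu=\sum_k 2^{-k}\mu_k\) simply spells out the countable-base remark from Section~\ref{sec:prelim}. One minor quibble is that \(\alpha>0\) is needed not only for transitivity but also inside Lemma~\ref{lem:center-no-factor}: Example~\ref{ex:center-endpoint} is a transitive \(E\)-system with \(\mathscr{M}_{0}\)-shadowing that is not weakly mixing.
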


\begin{proof}
By Proposition~\ref{prop:center-transitive}, \((X,f)\) is
transitive. This, together with \(\mathrm{supp}(X,f)=X\), implies that
\((X,f)\) is an \(E\)-system. The conclusion follows directly from
Theorem~\ref{thm:E-system}.
\end{proof}

\begin{lemma}\label{lem:proximal-fixed-point}
Suppose that \(p\) is a fixed point of \((X,f)\) and that \((x,p)\) is proximal
for every \(x\in X\). Then \((X,f)\) has the
\(\mathscr{M}_{0}\)-shadowing property.
\end{lemma}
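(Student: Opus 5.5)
The plan is to take the shadowing point to be the fixed point itself, \(z=p\). Since \(f^{j}(p)=p\) for all \(j\), it suffices to show the following: for every \(\varepsilon>0\) there is \(\delta>0\) such that every \(\delta\)-ergodic pseudo-orbit \(\{x_{i}\}\) satisfies
\[
\underline{\mathrm{Dens}}(\{j\in\Zp:d(x_{j},p)<\varepsilon\})>0 .
\]
So the whole argument reduces to showing that pseudo-orbits with density-zero errors visit \(B(p,\varepsilon)\) with positive lower density.

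\emph{Step 1 (uniform return time).} For \(n\in\Zp\) put \(W_{n}=f^{-n}(B(p,\varepsilon/2))\); each \(W_{n}\) is open. Since \((x,p)\) is proximal and \(f^{n}(p)=p\), we have \(\liminf_{n\to\infty} d(f^{n}(x),p)=0\), so every \(x\in X\) lies in some \(W_{n}\). By compactness, there is \(N\in\N\) with \(X=\bigcup_{n=0}^{N}W_{n}\). Thus every point enters \(B(p,\varepsilon/2)\) within at most \(N\) iterates.

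\emph{Step 2 (tracking short chains).} By uniform continuity of \(f\), and the standard induction on chain length, choose \(\delta>0\) such that every \(\delta\)-chain \(x_{i},x_{i+1},\ldots,x_{i+n}\) with \(n\leq N\) satisfies
\[
d(f^{k}(x_{i}),x_{i+k})<\varepsilon/2 \quad (0\leq k\leq n).
\]
Call the block \([i,i+N]\) \emph{good} if \(d(f(x_{l}),x_{l+1})<\delta\) for all \(i\leq l<i+N\). For a good block, Step 1 gives some \(k\leq N\) with \(f^{k}(x_{i})\in B(p,\varepsilon/2)\). Step 2 then gives \(d(x_{i+k},p)<\varepsilon\).

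\emph{Step 3 (density count).} Partition \(\Zp\) into the consecutive disjoint blocks \(B_{m}=[m(N+1),(m+1)(N+1))\). A block fails to be good only if it contains an index of the error set \(E=\{l:d(f(x_{l}),x_{l+1})\geq\delta\}\). Because the blocks are disjoint, each element of \(E\) spoils at most one block. By the \(\delta\)-ergodic hypothesis, the number of bad blocks among the first \(m\) is \(o(m)\). Each good block contains at least one index \(j\) with \(d(x_{j},p)<\varepsilon\). Hence the set \(\{j: d(f^{j}(p),x_{j})<\varepsilon\}\) has lower density at least \(1/(N+1)>0\), which is the \(\mathscr{M}_{0}\)-\(\varepsilon\)-shadowing by \(p\).

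The only real obstacle is Step 1, namely upgrading pointwise proximality to the fixed point into a uniform bound \(N\) on the first entrance time. The open-cover compactness argument resolves it, because proximality to a fixed point is exactly the statement that the orbit enters every neighbourhood of \(p\). The remaining steps are routine bookkeeping.
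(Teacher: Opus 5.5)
Your proposal is correct and follows the paper's proof essentially step for step. Both arguments shadow with the fixed point \(p\) itself and use compactness of the open cover \(\{f^{-n}(B(p,\varepsilon/2))\}\) to get a uniform entrance time. Both then use uniform continuity to track \(\delta\)-chains of that length, and count disjoint error-free blocks of length \(N+1\) to obtain lower density at least \(1/(N+1)>0\).
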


\begin{proof}
Fix any \(\varepsilon>0\). For any
\(x\in X\), since \((x,p)\) is proximal,
there exists \(j\in\N\)
such that
\(d(f^{j}(x),p)<\frac{\varepsilon}{2}.\)
Consequently,
\(\left\{f^{-j}(B(p,\frac{\varepsilon}{2})):
 j\in\N\right\}\)
is an open cover of \(X\). By compactness, there exist
\(j_{1},\ldots,j_{m}\in\N\) such that
\[
 X=\bigcup_{r=1}^{m}f^{-j_{r}}
 \left(B\left(p,\frac{\varepsilon}{2}\right)\right).
\]
Put \(K=\max\{j_{1},\ldots,j_{m}\}\). Then, for any \(x\in X\),
\(d(f^{j}(x),p)<\frac{\varepsilon}{2}\) for some
\(j\in\{0,1,\ldots,K\}\).
By uniform continuity of $f$, there exists \(\delta>0\) such
that every finite \(\delta\)-chain
\((x_{0},x_{1},\ldots,x_{K})\) satisfies
\[
 d(f^{j}(x_{0}),x_{j})<\frac{\varepsilon}{2}
 \quad (\forall j=0,1,\ldots,K).
\]

For any \(\delta\)-ergodic pseudo-orbit \(\xi=\{x_{i}\}_{i=0}^{\infty}\),
put
\(B=\{i\in\mathbb{N}_{0}:d(f(x_{i}),x_{i+1})\geq\delta\}.\)
Then \(\overline{\mathrm{Dens}}(B)=0\). Set \(L=K+1\) and let
\(Q=\{q\in\mathbb{N}_{0}:[qL,qL+K)\cap B=\varnothing\}.\)
For any \(q\in Q\), the finite sequence
\((x_{qL},x_{qL+1},\ldots,x_{qL+K})\) is a \(\delta\)-chain. By the
definition of \(K\), there exists \(j_{q}\in\{0,1,\ldots,K\}\) such that
\[
 d(f^{j_{q}}(x_{qL}),p)<\frac{\varepsilon}{2}.
\]
It follows from the choice of \(\delta\) that
\[
 d(f^{j_{q}}(x_{qL}),x_{qL+j_{q}})<\frac{\varepsilon}{2}.
\]
Therefore,
\[
 d(x_{qL+j_{q}},p)
 \leq d(x_{qL+j_{q}},f^{j_{q}}(x_{qL}))
       +d(f^{j_{q}}(x_{qL}),p)
 <\frac{\varepsilon}{2}+\frac{\varepsilon}{2}
 =\varepsilon.
\]
Put \(A=\{qL+j_{q}:q\in Q\}\). For any \(N\in\mathbb{N}\), let
\(m=\left\lfloor\frac{N}{L}\right\rfloor\). Every
\(q\in\{0,1,\ldots,m-1\}\setminus Q\) satisfies
\([qL,qL+K)\cap B\neq\varnothing\). Since these intervals are pairwise
disjoint,
\[
 \#(\{0,1,\ldots,m-1\}\setminus Q)
 \leq\#(B\cap[0,mL)).
\]
On the other hand, every \(q\in Q\cap\{0,1,\ldots,m-1\}\) determines a
distinct point \(qL+j_{q}\in[0,N)\). Therefore,
\[
\begin{aligned}
 \frac{\#(A\cap[0,N))}{N}
 &\geq\frac{\#(Q\cap\{0,1,\ldots,m-1\})}{N}\\
 &=\frac{m}{N}-\frac{\#(\{0,1,\ldots,m-1\}\setminus Q)}{N}\\
 &\geq\frac{m}{N}-\frac{\#(B\cap[0,mL))}{N}.
\end{aligned}
\]
Since \(\frac{m}{N}\longrightarrow\frac{1}{L}\) and
\(0\leq\frac{\#(B\cap [0, mL))}{N}
\leq\frac{\#(B\cap [0, N))}{N}\longrightarrow 0\), it
follows that \(\underline{\mathrm{Dens}}(A)\geq\frac{1}{L}>0\). For any
\(q\in Q\), the fixed-point property of \(p\) gives
\(d(f^{qL+j_{q}}(p),x_{qL+j_{q}})=d(p,x_{qL+j_{q}})<\varepsilon\). Thus \(p\)
\(\varepsilon\)-shadows \(\xi\) along \(A\), and hence \((X,f)\) has the
\(\mathscr{M}_{0}\)-shadowing property.
\end{proof}

\begin{example}\label{ex:center-endpoint}
Let \(Y\subseteq\{0,1\}^{\mathbb{N}_{0}}\) be the nontrivial proximal subshift
constructed in \cite[Section~6]{KLO}, and put \(g=\left.\sigma\right|_{Y}\),
where \(\sigma\) denotes the shift map.
Then the following properties hold:
\begin{enumerate}[label=\textup{(\arabic*)}]
\item By the construction in \cite[Section~6]{KLO},
\(0^{\infty}\in Y\), and hence \(g(0^{\infty})=0^{\infty}\). Since
\((Y,g)\) is proximal, every point of \(Y\) is proximal to
\(0^{\infty}\);
\item by \cite[Theorem~27]{KLO}, \((Y,g)\) is mixing and there
exists an invariant Borel probability measure \(\mu\) on \(Y\) such that
\(\operatorname{supp}(\mu)=Y\).
\end{enumerate}

Let \(X\) be the quotient space of the disjoint union of two
copies of \(Y\) obtained by identifying their copies of \(0^{\infty}\) to a
single point. More precisely, let
\[
 X=(Y\times\{0,1\})/{\sim},
\]
where \(\sim\) is the equivalence relation that identifies
\((0^{\infty},0)\) with \((0^{\infty},1)\) and leaves all other points in
singleton equivalence classes, and
let \(q\colon Y\times\{0,1\}\longrightarrow X\) be the quotient map. The equivalence
relation is closed, so \(X\) is compact and metrizable. Fix a compatible
metric \(d\) on \(X\) and put
\(\mathbf{p}_{X}=q(0^{\infty},0)=q(0^{\infty},1)
=\{(0^{\infty},0),(0^{\infty},1)\}\). Define
\(f\colon(X,d)\longrightarrow(X,d)\) and
\(\nu\colon\mathscr{B}(X)\longrightarrow[0,1]\), respectively, by
\[
 f(q(y,e))=q(g(y),1-e)
\quad(y\in Y,\ e\in\{0,1\}),
\]
and
\[
\nu(A)=\frac{1}{2}\bigl(\mu(\{y\in Y:q(y,0)\in A\})
+\mu(\{y\in Y:q(y,1)\in A\})\bigr)
\quad(A\in\mathscr{B}(X)).
\]
It can be verified that the following properties hold:
\begin{enumerate}[label=\textup{(\roman*)}]
\item Since \(g(0^{\infty})=0^{\infty}\), the map \(f\) is
well-defined and continuous, and \(\mathbf{p}_{X}\) is a fixed point of \(f\).
Moreover,
\[
 f^{2}(q(y,0))=q(g^{2}(y),0)
 \quad\text{and}\quad
 f^{2}(q(y,1))=q(g^{2}(y),1)
 \quad(y\in Y).
\]
Since \(g\) is mixing, \(g^{2}\) is surjective; hence the displayed identities
show that \(f^{2}\), and consequently \(f\), is surjective;
\item Given any \(x\in X\),
since \(q\) is surjective, there exist \(y\in Y\) and
\(e\in\{0,1\}\) such that \(x=q(y,e)\). Since \(y\) is proximal to
\(0^{\infty}\), there exists an increasing sequence
\((n_{k})\) in \(\mathbb{N}_{0}\) such that
\(g^{n_{k}}(y)\longrightarrow0^{\infty}\). Since \(\{0,1\}\) is finite, there exist
\(e'\in\{0,1\}\) and a
subsequence \((n'_{k})\) of \((n_{k})\) such that
\((e+n'_{k})\bmod 2=e'\) for every \(k\). By the continuity
of \(q\),
\(f^{n'_{k}}(x)=q(g^{n'_{k}}(y),e')
 \longrightarrow q(0^{\infty},e')=\mathbf{p}_{X},\)
and thus \(\liminf_{n\to\infty}d(f^{n}(x),\mathbf{p}_{X})=0\).
Lemma~\ref{lem:proximal-fixed-point} therefore shows that \((X,f)\) has the
\(\mathscr{M}_{0}\)-shadowing property;
\item For every \(A\in\mathscr{B}(X)\), the \(g\)-invariance of
\(\mu\) gives
\[
\begin{aligned}
 \nu(f^{-1}(A))
 &=\frac{1}{2}(
   \mu(\{y\in Y:q(g(y),1)\in A\})
   +\mu(\{y\in Y:q(g(y),0)\in A\}))\\
 &=\frac{1}{2}(
   \mu(\{y\in Y:q(y,0)\in A\})
   +\mu(\{y\in Y:q(y,1)\in A\}))
 =\nu(A).
\end{aligned}
\]
Hence
\(\nu\in\mathcal{M}(X,f)\). Moreover, \(\operatorname{supp}(\mu)=Y\) gives
\(X\supseteq\operatorname{supp}(\nu)
 \supseteq q(Y\times\{0\})\cup q(Y\times\{1\})=X.\)
In particular, \(\mathrm{supp}(X,f)=X\);
\item The system \((X,f)\) is transitive. Indeed, since \((Y,g)\)
is nontrivial and mixing, \(0^{\infty}\) is not isolated in \(Y\), and hence
\(\mathbf{p}_{X}\) has empty interior in \(X\). Let \(U,V\subseteq X\) be
nonempty open sets. There exist \(e_{0},e_{1}\in\{0,1\}\) and nonempty open
sets \(U',V'\subseteq Y\setminus\{0^{\infty}\}\) such that
\[
 q(U'\times\{e_{0}\})\subseteq U
 \quad\text{and}\quad
 q(V'\times\{e_{1}\})\subseteq V.
\]
Since \(g\) is mixing, there exists \(N\in\mathbb{N}_{0}\) such that
\(g^{n}(U')\cap V'\neq\varnothing\) for every \(n\geq N\). Taking
\(n_{0}\geq N\) with \(n_{0}\equiv e_{1}-e_{0}\pmod 2\), we obtain
\[
 f^{n_{0}}(q(U'\times\{e_{0}\}))
 \cap q(V'\times\{e_{1}\})\neq\varnothing.
\]
Thus \(f^{n_{0}}(U)\cap V\neq\varnothing\);
\item The system \((X,f)\) is not weakly mixing. Let
\(W=Y\setminus\{0^{\infty}\}\), and put
\(U_{0}=q(W\times\{0\})\) and \(U_{1}=q(W\times\{1\})\). Then \(U_{0}\) and \(U_{1}\) are
nonempty open subsets of \(X\). If \(n\) is even, then
\(f^{n}(U_{0})\cap U_{1}=\varnothing\), whereas if \(n\) is odd, then
\(f^{n}(U_{0})\cap U_{0}=\varnothing\). Consequently,
\[
 (f\times f)^{n}(U_{0}\times U_{0})
 \cap(U_{0}\times U_{1})=\varnothing
 \quad(n\in\mathbb{N}_{0}),
\]
so \(f\times f\) is not transitive, i.e., $f$ is not weakly mixing.
\end{enumerate}

\smallskip

It follows from \textup{(i)}--\textup{(v)} that
\((X,f)\) is an \(E\)-system with the
\(\mathscr{M}_{0}\)-shadowing property which is not weakly mixing.
\end{example}

\begin{remark}
Theorem~\ref{thm:main} includes the endpoint \(\alpha=0\): every
\(M\)-system with the \(\mathscr{M}_{0}\)-shadowing property is weakly mixing.
In contrast, Example~\ref{ex:center-endpoint} gives an \(E\)-system with the
\(\mathscr{M}_{0}\)-shadowing property which is not weakly mixing. Hence the
range \(\alpha\in(0,1)\) in Theorem~\ref{thm:E-system} is sharp.
\end{remark}

\end{document}